\documentclass[12pt,a4paper]{amsart}
\usepackage{graphics}
\usepackage{epsfig}
\usepackage{graphicx}
\usepackage[all]{xy}
\theoremstyle{plain}
\usepackage{amssymb}

\advance\hoffset-20mm \advance\textwidth41mm

\newcommand{\p}{\partial}

\newtheorem{theorem}{Theorem}
\newtheorem{lemma}{Lemma}
\newtheorem{proposition}{Proposition}
\newtheorem{corollary}{Corollary}

\theoremstyle{definition}
\newtheorem{definition}{Definition}
\newtheorem*{definition*}{Definition}
\newtheorem{example}{Example}
\newtheorem{remark}{Remark}

\def\AA{{\mathbb A}}

\def\QQ{{\mathbb Q}}

\def\ZZ{{\mathbb Z}}

\def\GG{{\mathbb G}}

\def\TT{{\mathbb T}}
\def\kk{{\mathbf k}}

\def\<{{\langle}}
\def\>{{\rangle}}

\def\ad{\mathop{\rm ad}}
\def\LND{\mathop{\rm LND}}
\def\Ker{\mathop{\rm Ker}}
\def\Spec{\mathop{\rm Spec}}
\def\Der{\mathop{\rm Der}}
\def\Nil{\mathop{\rm Nil}}
\def\Frac{\mathop{\rm Frac}}
\def\deg{\mathop{\rm deg}}
\def\Aut{\mathop{\rm Aut}}
\def\exp{\mathop{\rm exp}}
\def\diag{\mathop{\rm diag}}
\def\Lie{\mathop{\rm Lie}}

\begin{document}
\sloppy
\title{On sums of homogeneous locally nilpotent derivations}
\author{Elena Romaskevich}
\address{Department of Higher Algebra, Faculty of Mechanics and Mathematics,
Moscow State University, Leninskie Gory 1, GSP-1, Moscow, 119991,
Russia}
\email{lena.apq@gmail.com}

%\date{\today}
%
\begin{abstract}
Let $A$ be a commutative associative integrally closed $\kk$-algebra
without zero divisors effectively graded by a lattice. We obtain a criterion of local nilpotency of the sum of two
homogeneous locally nilpotent derivations (LNDs) of fiber
type on $A$ in terms of their degrees. The same problem is solved for commutators of two homogeneous LNDs.
\end{abstract}
\subjclass[2010]{Primary 13N15; \ Secondary 14M25}
\keywords{Graded ring, derivation, torus action, root.}
\maketitle

\section*{Introduction}
Let $\kk$ be an algebraically closed field of characteristic zero. We
consider an algebraic torus $\TT\simeq(\kk^{\times})^n$ acting effectively
on a normal affine variety $X$ and the corresponding grading of the algebra
$A=\kk[X]$ by the lattice $M$ of characters of $\TT$.

In this paper we study some properties of homogeneous locally
nilpotent derivations (LNDs). A~derivation $\p$ on $A$ is
said to be {\it locally nilpotent} if for each $a\in A$ there exists
$n\in \ZZ_{>0}$ such that $\p^n(a)=0$. LNDs on $A$ are in one-to-one
correspondence with regular actions of the group $\GG_a(\kk)=(\kk,+)$ on $X$, see \cite{GF}. It is easy to see that a derivation $\p$ on $A$ is
homogeneous if and only if the corresponding $\GG_a(\kk)$-action is
normalized by the torus $\TT$.

We use a description of homogeneous LNDs on an $M$-graded
algebra~$A$. Recall that a homogeneous LND on $\kk[X]$ is said to be
of {\it fiber type} if $\p(\kk(X)^{\TT})=0$, see Definition
$\ref{def-fiber/horizontal}$. In geometric terms, $\p$ is of fiber
type if and only if generic orbits of the corresponding
$\GG_a(\kk)$-action are contained in the closures of $\TT$-orbits. A complete classification of
homogeneous LNDs of fiber type is due to A.~Liendo, see \cite{AL-2}.

A problem that one faces when dealing with locally nilpotent
derivations is that the set of all LNDs on an algebra $A$ admits no
obvious algebraic structure. In Epilogue to~\cite{GF} G.~Freudenburg
poses several natural questions concerning the structure of
$\LND(A)$. Namely, given $\p_1,\p_2\in \LND(A)$, under what conditions
are $[\p_1,\p_2]$ and $\p_1+\p_2$ locally nilpotent? These questions
are still open in general. Some results were otained by M.~Ferrero, Y.~Lequain, and
A.~Nowicki in \cite{MF+YL+AN}. Namely, given two commuting locally nilpotent
derivations $d, \delta$ $\in \Der(R)$ and an element
$a \in R$, it is proven that the derivation $ad + \delta$ is locally
nilpotent if and only if $d(a) = 0$.

In this note we give a complete answer to the above-mentioned questions for homogeneous locally
nilpotent derivations of fiber type. Namely, $\p_1+\p_2$ (resp. $[\p_1,\p_2]$) is locally nilpotent if and only if the sum $\deg\,\p_1+\deg\,\p_2$ of their degrees is not in the weight cone $\omega_M(A)$, see Theorem $\ref{main_theorem}$ (resp. Proposition $\ref{commutator}$).

It should be noted that study of algebraic properties of homogeneous locally nilpotent derivations plays an important role in recent works on automorphisms of algebraic varieties, see, e.g. \cite{IA+AL}, \cite{IA+HF+SK+FK+MZ}, \cite{IA+JH+EH+AL}. One more motivation comes from the question posed by V.~Popov, see \cite[Problem 3.1]{VP}. He considers two locally nilpotent derivations $\p_1$ and $\p_2\in \LND(\kk[x_1,\dots,x_n])$ and asks when the minimal closed subgroup of $\Aut_{\kk}\kk[x_1,\dots,x_n]$ containing the one dimensional subgroups $\{\exp(t\p_1)\ |\ t\in\kk\}$ and $\{\exp(t\p_2)\ |\ t\in\kk\}$ is finite dimensional. Here it is important to know when $\p_1+\p_2$ and $[\p_1,\p_2]$ are locally finite.

In Section $\ref{sec1}$ we collect some basic definitions and facts
about LNDs. In Section $\ref{sec2}$ we recall generalities
on $\TT$-varieties and corresponding gradings on their coordinate
algebras. Section $\ref{sec3}$ is devoted to background on
homogeneous LNDs and to the classification of LNDs of fiber
type from \cite{AL-2}. In Sections $\ref{sec4}$ and
$\ref{sec5}$ we study commutators and sums of homogeneous LNDs of
fiber type. Section $\ref{sec6}$ contains some corollaries of Theorem $\ref{main_theorem}$, examples and generalizations.
%Section $\ref{sec6}$ contains a generalization of the main result to
%the case of derivations of horizontal type.

\section{Locally nilpotent derivations}
\label{sec1}

Let $A$ be a commutative associative $\kk$-algebra without zero
divisors. A {\it derivation} $\p: A\to A$ is a linear map satisfying the
Leibniz rule:
$$
\p(ab)=a\p b+b\p a \quad \mathrm{for\ all}\ a,\,b \in A
$$

Denote the set of all derivations on $A$ by $\Der(A)$.

Recall that an algebra is said to be {\it graded} by a commutative
semigroup $S$ if there is a direct sum decomposition $A=\bigoplus_{s\in S}A_s$ such that $A_s\cdot A_{s'}\subseteq
A_{s+s'}$ for all $s,\ s'\in S$. A~derivation $\p$ is called {\it
homogeneous} if it sends homogeneous elements to homogeneous ones.
We will write $\p(s)=s'$, for $s,\ s'\in S$, if
$A_s\nsubseteq\Ker(\p)$ and $\p(A_s)\subseteq A_{s'}$. By Leibniz
rule, for $a\in A_s \backslash\Ker(\p)$ and $b\in A_{s'}\backslash
\Ker(\p)$ we have
$$
\p(ab)=a\p b+b\p a \in A_{\p(s+s')},
$$
and so
$$
\p(s+s')=s+\p(s')=s'+\p(s).
$$
Thus, for a homogeneous nonzero derivation $\p$ there exists $s_0\in
S$ such that $\p A_s \subset A_{s+s_0}$ for all $s \in S$. An element
$s_0\in S$ is called the {\it degree} of $\p$ and is denoted by $\deg\,\p$.

\begin{definition}
A derivation $\p \in \Der(A)$ is called {\it locally nilpotent} (LND
for short) if for every $f\in A$ there exists $n\in \ZZ_{>0}$ such
that $\p ^nf=0$. The set of all locally nilpotent derivations on~$A$
is denoted by $\LND(A)$.
\end{definition}

We associate to a derivation $\p$ a set $\Nil(\p)=\{f\in A\ |\
\exists\,n\in\ZZ_{>0}: \p^n f=0\}$. Thus, $\p\in \LND(A)$ exactly when
$\Nil(\p)=A$.

Note that if $\p_1,\ \p_2\in \LND(A)$ and $[\p_1,\p_2]=0$, then $\p_1+\p_2\in\LND(A)$, i.e. the sum of commuting LNDs is an LND as well.

%\begin{lemma}\label{sum of commute}
%Suppose $\p_1,\ \p_2\in \LND(A)$ and $[\p_1,\p_2]=0$. Then $\p_1+\p_2
%\in \LND(A)$.
%\end{lemma}
%
%\begin{proof}
%For a given $f\in A$ choose $m$ and $n$ such that $\p_1^n f=0$ and
%$\p_2^m f=0$. Since derivations $\p_1$ and $\p_2$ commute,
%$$
%(\p_1+\p_2)^{n+m}(f)=\sum_{i+j=n+m} {n+m \choose i}\p_1^i\p_2^j(f).
%$$
%Since in each summand either $i\geqslant n$ or $j\geqslant m$, they
%all vanish and hence $\p_1+\p_2\in \LND(A)$.
%\end{proof}

Recall that locally nilpotent derivations on an affine algebra $A$ are in one-to-one correspondence with regular actions of $\GG_a(\kk)$ on $X=\Spec\,A$. Indeed, we have a rational representation $\eta:\,\GG_a(\kk)\hookrightarrow \Aut_{\kk}(A)$, where $\eta(t)=\exp(tD)$. In geometric terms this means that $D$ induces a regular $\GG_a(\kk)$-action on $X$.
Conversely, let $\rho:\,\GG_a(\kk)\times X\to X$ be a regular $\GG_a(\kk)$-action. Then $\rho$ induces a locally nilpotent derivation $D=\frac{d}{dt}|_{t=0}\rho^*$, where $\rho^*:\, A\to A[t]$. For more detail see \cite[Section 1.5]{GF}.

\section{$\TT$-varieties}
\label{sec2}

An {\it algebraic torus} $\TT=\TT^n$ of dimension $n$ is the
algebraic variety $(\kk^{\times})^n$ with the natural structure of
algebraic group. A {\it $\TT$-variety} is an algebraic variety
endowed with an effective $\TT$-action.

A {\it character} (resp. {\it one-parameter subgroup}) of $\TT$ is a
homomorphism of algebraic groups $\chi:\,\TT\to \kk^{\times}$ (resp.
$\lambda:\,~ \kk^{\times}\to\TT$). The set of all characters (resp.
one-parameter subgroups) form a lattice $M$ (resp. $N$) of rank $n$.
%We consider $M$ and $N$ as abstract lattices.
For every $m\in M$ we denote by $\chi^m$ the corresponding character of $\TT$. We also let
$M_{\QQ}$ and $N_{\QQ}$ be the rational vector spaces $M\otimes_{\ZZ} \QQ$
and $N\otimes_{\ZZ} \QQ$.

Let $A$ be an {\it affine} algebra, i.e. a commutative associative
finitely generated $\kk$-algebra with unit and without zero divisors. It is well
known that effective $\TT$-actions on an affine variety $X=\Spec\,A$ are in
one-to-one correspondence with effective $M$-gradings on $A$. Thus, for a
$\TT$-variety $X$ we have an effective $M$-grading on
$A=\kk[X]$:
$$
A=\bigoplus_{m\in M}\widetilde{A_m}.
$$
Let $K=\Frac\,A$ be the field of fractions. We consider
$$
K_0=\left\{\frac{f}{g}\in K\ |\ \deg f=\deg g\right\}.
$$
Notice that $K_0$ coincides with the field of $\TT$-invariant
functions $\kk(X)^{\TT}$. Thus, we have a tower of field extensions
$\kk\subseteq \kk(X)^{\TT} \subseteq K$. One may represent
$\widetilde{A_m}=A_m\chi^m$, where $A_m\subseteq \kk(X)^{\TT}$. The
{\it weight cone} $\omega \subseteq M_{\QQ}$ of a given $M$-grading is a
cone in $M_{\QQ}$ spanned by the set $\{m\in M\ |\ A_m\ne 0\}$.
For a cone $\omega \subseteq M_{\QQ}$ we denote the set
$\omega\cap M$ by $\omega_M$. Finally, we have
$$
A=\bigoplus_{m\in \omega_M} A_m \chi^m, \quad A_m\subseteq
\kk(X)^{\TT}.
$$
Since $A$ is finitely generated, the cone $\omega$ is
polyhedral, and since the $M$-grading is effective, $\omega$ is
of full dimension.

{\it Complexity} of a $\TT$-action is the transcendence degree
of the field of $\TT$-invariant rational functions $\kk(X)^{\TT}$ over $\kk$.
In geometric terms, complexity of a $\TT$-action equals the
codimension of the generic $\TT$-orbit. In particular, for a
$\TT$-variety of complexity zero, $\kk(X)^{\TT}=\kk$ and $A\subseteq
\kk[M]$, where $\kk[M]$ stays for the group algebra of the lattice $M$ and is isomorphic to the algebra of Laurent polynomials over $\kk$. A {\it toric variety} is a normal $\TT$-variety of
complexity zero, or, equivalently, $\TT$ acts with an open orbit. From now on we will consider only normal
$\TT$-varieties.

\section{Demazure roots and homogeneous LNDs}
\label{sec3}

Let $M$ and $N$ be the lattices of characters and one-parameter subgroups of a torus $\TT$. We consider the natural pairing $\<\cdot,\cdot\>: M\times N\to \ZZ$ given by
$$
\<\chi,\lambda\>=l, \quad {\rm if} \quad \chi\circ\lambda(t)=t^l.
$$
This pairing extends in an obvious way to a pairing
$$
\<\cdot,\cdot\>: M_{\QQ}\times N_{\QQ}\to \QQ
$$
between $\QQ$-vector spaces. Let $A$ be as before an $M$-graded affine
algebra with the weight cone $\omega\subseteq M_{\QQ}$. Let
$\omega^{\vee}=\sigma\subseteq N_{\QQ}$ be the dual cone. Since
$\omega$ is of full dimension, $\sigma$ is a pointed polyhedral cone.

\begin{lemma}\cite[Lemma 1.13]{AL-1}
For any homogeneous LND $\p$ on $A$ the following holds.
\begin{itemize}
\item[1)] The derivation $\p$ extends in a unique way to a homogeneous
$\kk$-derivation on $\kk(X)^{\TT}[M]$.
\item[2)] If $\p(\kk(X)^{\TT})=0$ then the extension of $\p$ as in 1)
restricts to a homogeneous locally nilpotent
$\kk(X)^{\TT}$-derivation on $\kk(X)^{\TT}[\omega_M]$.
\end{itemize}
\end{lemma}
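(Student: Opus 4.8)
The plan is to treat the two claims in turn, using only the quotient rule, the homogeneity of $\p$, and, for the second part, the fact that $\p$ is already known to be locally nilpotent on $A$. For part 1), I would first extend $\p$ to the fraction field $K=\Frac A$ by the quotient rule $\p(f/g)=(g\,\p f-f\,\p g)/g^2$, which is the unique extension of $\p$ to a derivation of $K$. The next step is to check that this extension is homogeneous of degree $s_0=\deg\p$: if $p,q\in A$ are homogeneous of degrees $m_1,m_2$, then $q\,\p p$ and $p\,\p q$ are homogeneous of degree $m_1+m_2+s_0$ while $q^2$ has degree $2m_2$, so $\p(p/q)$ is homogeneous of degree $m_1-m_2+s_0$. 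Since $\chi^m\in K$ for every $m\in M$ (for $m$ with $A_m\ne0$ one has $a\chi^m\in A$ with $0\ne a\in K_0:=\kk(X)^{\TT}\subseteq K$, and such $m$ generate $M$), the degree-$m$ homogeneous part of $K$ is the rank-one $K_0$-module $K_0\chi^m$, and $K_0[M]=\bigoplus_{m\in M}K_0\chi^m$ is exactly the subring of finite sums of homogeneous elements of $K$. Homogeneity then gives $\p(\chi^m)\in K_0\chi^{m+s_0}$ and $\p(K_0)\subseteq K_0\chi^{s_0}$, so the Leibniz rule yields $\p(K_0[M])\subseteq K_0[M]$; this is the required homogeneous extension. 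Uniqueness is immediate, since $K_0[M]$ and $A$ share the fraction field $K$ and the extension of a derivation to a fraction field is unique.

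For part 2), assume in addition $\p(K_0)=0$, so $\p$ is a $K_0$-derivation of $K_0[M]$. Being $K_0$-linear and homogeneous, it is determined by $\p(\chi^m)=f(m)\,\chi^{m+s_0}$ with $f(m)\in K_0$, and applying $\p$ to $\chi^m\chi^{m'}$ shows $f\colon M\to(K_0,+)$ is additive. Iterating gives $\p^n(\chi^m)=\bigl(\prod_{j=0}^{n-1}f(m+js_0)\bigr)\chi^{m+ns_0}$, where $f(m+js_0)=f(m)+jf(s_0)$. I would first use that $\p$ is a nonzero LND on $A$: if $f(s_0)=0$, then $\p^n(a\chi^m)=a\,f(m)^n\chi^{m+ns_0}$ for $a\in A_m$, which forces $f(m)=0$ whenever $A_m\ne0$ and hence $f\equiv0$, i.e. $\p=0$, by effectiveness of the grading; so $f(s_0)\ne0$. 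Setting $g(m)=-f(m)/f(s_0)$, an additive function with $g(s_0)=-1$, the product becomes $\prod_{j=0}^{n-1}f(m+js_0)=f(s_0)^n\prod_{j=0}^{n-1}(j-g(m))$.

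Next I would pin down $g$. For every $m$ in the weight semigroup $S=\{m:A_m\ne0\}$, choosing $0\ne a\in A_m$ and applying local nilpotency of $\p$ to $a\chi^m\in A$ forces the product to vanish, i.e. $g(m)\in\ZZ_{\ge0}$. Since $S$ generates $M$ as a group and spans the weight cone $\omega$, the additive map $g$ takes integer values on all of $M$ and is nonnegative on $\omega$; hence $g=\<\cdot,\rho\>$ for a unique $\rho\in\sigma\cap N=\omega^{\vee}\cap N$, with $\<s_0,\rho\>=-1$. Thus $\p(\chi^m)=c\,\<m,\rho\>\,\chi^{m+s_0}$ with $c=-f(s_0)\in K_0^{\times}$, and for $m\in\omega_M$ the product $\prod_{j=0}^{n-1}(j-\<m,\rho\>)$ vanishes once $n>\<m,\rho\>$. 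Writing a general element of $K_0[\omega_M]$ as a finite sum $\sum h_i\chi^{m_i}$ and taking $n>\max_i\<m_i,\rho\>$ then shows the restriction of $\p$ is locally nilpotent.

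The \textbf{main obstacle} is the remaining point, that $\p$ actually \emph{preserves} $K_0[\omega_M]$ (so that "restricts to a derivation on $K_0[\omega_M]$" even type-checks): one must show $m+s_0\in\omega_M$ for every $m\in\omega_M$ with $\<m,\rho\>\ge1$. For $m\in S$ this is free, since $\p(A)\subseteq A$ gives $0\ne c\<m,\rho\>A_m\subseteq A_{m+s_0}$ and hence $m+s_0\in S$; the difficulty is the lattice points $m\in\omega_M\setminus S$, where $A_m=0$. I would resolve this by showing that $s_0$ is a Demazure root with distinguished ray $\rho$, i.e. that $\<s_0,\rho'\>\ge0$ for every other ray $\rho'$ of $\sigma$: testing on a point $m\in S$ in the relative interior of the facet $(\rho')^{\perp}\cap\omega$ (such points exist because $S$ spans $\omega$) and with $\<m,\rho\>\ge1$, the inclusion $m+s_0\in\omega$ forces $\<s_0,\rho'\>\ge0$. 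With $\<s_0,\rho\>=-1$ and $\<s_0,\rho'\>\ge0$ for all $\rho'\ne\rho$ in hand, the containment $m+s_0\in\omega$ for \emph{every} $m\in\omega_M$ with $\<m,\rho\>\ge1$ follows ray by ray, completing the proof that $\p|_{K_0[\omega_M]}$ is a homogeneous LND. Verifying the existence of the required facet-interior points of $S$ and carrying out the ray-by-ray bookkeeping is where the combinatorial work concentrates.
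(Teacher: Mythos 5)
This lemma is quoted in the paper from \cite[Lemma 1.13]{AL-1} with no proof supplied, so there is nothing in the text itself to compare against; judged on its own, your argument is essentially correct and self-contained, but it takes a much more computational route than the standard one. Your part 1) (extend to $\Frac A$ by the quotient rule, observe the extension is homogeneous, hence preserves the subring $\kk(X)^{\TT}[M]$ of finite sums of homogeneous fractions, with uniqueness forced by uniqueness of the extension to the fraction field) is exactly right. In part 2) you in effect re-derive a piece of Liendo's classification: you produce the linear functional $g=\<\cdot,\rho\>$ with $\rho\in\sigma\cap N$, $\<\deg\p,\rho\>=-1$, show $\<\deg\p,\rho'\>\geqslant 0$ for the other rays, and conclude both that $\p$ preserves $K_0[\omega_M]$ and that it is locally nilpotent there. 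The usual proof is shorter and avoids all of this combinatorics: $\p$ preserves $K_0\cdot A=K_0[S]$ (where $S$ is the weight monoid) and is locally nilpotent there since it is $K_0$-linear; $K_0[\omega_M]$ is the integral closure of $K_0[S]$ because $\omega_M$ is the saturation of $S$; and by the theorems of Seidenberg and Vasconcelos a (locally nilpotent) derivation of a Noetherian domain extends to a (locally nilpotent) derivation of its integral closure. Your route buys an explicit formula $\p(\chi^m)=c\<m,\rho\>\chi^{m+\deg\p}$ (essentially the content of Theorem \ref{HLND}) at the cost of the facet bookkeeping you flag at the end. That flagged point does close, but state it correctly: you do not need $m$ in the relative interior of the facet $F=(\rho')^{\perp}\cap\omega$; you only need some $m\in S\cap F$ with $\<m,\rho\>\geqslant 1$, and this exists because a face of the cone generated by $S$ is generated by the elements of $S$ lying on that face, while $F\not\subseteq\rho^{\perp}$ since $\sigma$ is pointed and $\rho'\ne\rho$. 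With that inserted (and the trivial case $\p=0$ set aside so that $f(\deg\p)\ne 0$ is legitimate), the proof is complete.
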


\begin{definition}\label{def-fiber/horizontal}
A homogeneous LND $\p$ on $A$ is said to be of {\it fiber type} if
$\p(\kk(X)^{\TT})=0$ and of {\it horizontal type} otherwise.
Derivations of fiber type correspond to $\GG_a(\kk)$-actions such
that generic $\GG_a(\kk)$-orbits are contained in the closures of
$\TT$-orbits.
\end{definition}

In \cite{AL-2} A.Liendo gave a complete classification of
homogeneous LNDs of fiber type on an arbitrary graded integrally closed affine algebra.
We need the following notation to present his results.

%Let $\rho\in N$ and $e\in M$ be lattice vectors. We define a
%homogeneous derivation $\p_{\rho,e}$ of degree $e$ on
%$k[M]=\bigoplus_{m\in M}k\chi^m$ as follows:
%$$
%\p_{\rho,e}(\chi^m)=\<m,\rho\>\cdot \chi^{m+e}.
%$$

%Let $\sigma$ be a pointed polyhedral cone in the vector space
%$N_{\QQ}$ with dual cone $\sigma^{\vee}=\omega\subseteq M_{\QQ}$ and
%denote $\omega_M=\omega\cap M$. Let
%$$
%A=k[\omega_M]=\bigoplus_{m\in\omega_M}k\chi^m
%$$
%be the affine semigroup algebra of the cone $\sigma$ with the
%corresponding affine toric variety $X=\Spec \,A$. Since the cone
%$\sigma$ is pointed, $\omega$ is of full dimension and the
%subalgebra $A\subseteq k[M]$ is effectively graded by $M$.

For a ray $\rho$ of a cone $\sigma$ we let $\sigma_{\rho}$ denote the cone spanned by all the rays of $\sigma$ except~$\rho$. From now on we denote by $\rho$ both a ray and its primitive
vector. We also let
$$
S_{\rho}=\sigma_{\rho}^{\vee}\cap\{e\in M\ |\ \<e,\rho\>=-1\}.
$$

In other words, $S_{\rho}$ is the set of lattice vectors
$e\in M$ such that $\<e,\rho\>=-1$ and $\<e,\rho^{'}\>\geq 0$ for
every other ray $\rho^{'} \subseteq \sigma$.

\begin{definition}(see \cite{MD})
In the above notation the elements of the set
$\Re=\bigcup_{\rho\subseteq\sigma} S_{\rho}$ are called {\it
Demazure roots} of the cone $\sigma$.
\end{definition}

Let $e\in S_{\rho}$ be a Demazure root corresponding to a ray $\rho$
of the cone $\sigma$. Set
$$
\Phi_e=\{f\in K\ |\ f\cdot A_m\subseteq A_{m+e}\},\ \Phi_e^{\times}=\Phi_e\backslash\{0\}.
$$
We define a homogeneous derivation $\p_{\rho,e}$ of degree $e$
on~$\kk(X)^{\TT}[M]$ by
$$
\p_{\rho,e}(\chi^m)=\<m,\rho\>\chi^{m+e}.
$$
Clearly, for $f\in\Phi_e$ the product $f\p_{\rho,e}$ is a derivation
on $\kk(X)^{\TT}[\omega_M]$. Finally, denote
$\p_{\rho,e,f}=f\p_{\rho,e}|_A$, which is a homogeneous LND on $A$.
The classification theorem is as follows.

\begin{theorem}\label{HLND}\cite[Theorem 2.4]{AL-2}
Every nonzero homogeneous LND $\p$ of fiber type on $A$ is of the
form $\p=\p_{\rho,e,f}$ for some ray $\rho\subseteq\sigma$, some
Demazure root $e\in S_{\rho}$ and some function
$f\in\Phi_e^{\times}$.
\end{theorem}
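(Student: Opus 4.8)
The plan is to pass from $A$ to the semigroup algebra $K_0[\omega_M]$, where $K_0=\kk(X)^{\TT}$, classify the relevant $K_0$-derivations there by Demazure's method, and then descend. By the Lemma above, a nonzero homogeneous LND $\p$ of fiber type extends uniquely to a homogeneous $\kk$-derivation $\bar\p$ of $K_0[M]$; since $\p(K_0)=0$, this $\bar\p$ is a $K_0$-derivation, and by part~2) it restricts to a homogeneous locally nilpotent $K_0$-derivation of $K_0[\omega_M]$. Put $e=\deg\p$ and write $\bar\p(\chi^m)=c_m\chi^{m+e}$ with $c_m\in K_0$. Applying the Leibniz rule to $\chi^m\chi^{m'}=\chi^{m+m'}$ and using $\bar\p(K_0)=0$ gives $c_{m+m'}=c_m+c_{m'}$, so $c\colon M\to(K_0,+)$ is additive; thus a homogeneous $K_0$-derivation of $K_0[M]$ is determined by its degree $e$ together with this additive function $c$.

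Next I would read off the constraints imposed by local nilpotency. Induction yields
\[
\bar\p^{\,k}(\chi^m)=\Big(\prod_{j=0}^{k-1}(c_m+j\,c_e)\Big)\chi^{m+ke},
\]
and, $\bar\p$ being $K_0$-linear and $K_0$ a field, local nilpotency on $K_0[\omega_M]$ amounts to requiring that for every $m\in\omega_M$ some factor $c_m+j\,c_e$ vanish. For $\bar\p\neq0$ this forces $c_e\neq0$, and then $\ell(m):=-c_m/c_e\in\ZZ_{\geq0}$ for all $m\in\omega_M$. Since $\omega$ is of full dimension, $\omega_M$ generates $M$ as a group, so the additive, integer-valued $\ell$ defines a lattice vector $\rho_0\in N$ with $\ell(m)=\<m,\rho_0\>$; the inequalities on $\omega_M$ say $\rho_0\in\omega^{\vee}=\sigma$, while $\ell(e)=-c_e/c_e=-1$ gives $\<e,\rho_0\>=-1$. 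Writing $f:=-c_e\in K_0^{\times}$ we obtain $\bar\p=f\,\p_{\rho_0,e}$.

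The crux is to show that $\rho_0$ lies on a single ray $\rho$ of $\sigma$ and that $\<e,\rho\>=-1$; here the hypothesis that $\bar\p$ comes from a derivation of $A=\bigoplus_{m\in\omega_M}A_m\chi^m$ enters. For $a\in A_m$ one has $\bar\p(a\chi^m)=f\,\<m,\rho_0\>\,a\,\chi^{m+e}$, so whenever $A_m\neq0$ and $\<m,\rho_0\>\neq0$ one must have $m+e\in\omega_M$. Fix a ray $\rho'$ of $\sigma$ not proportional to $\rho_0$. The weights $\{m:A_m\neq0\}$ span $\omega$, hence span its facet $\omega\cap(\rho')^{\perp}$; as $\<\cdot,\rho_0\>$ is nonnegative on $\omega$ but not identically zero on that facet, some weight $m$ with $A_m\neq0$ lies in the facet and satisfies $\<m,\rho_0\>>0$. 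For this $m$ we get $m+e\in\omega_M$, whence $\<e,\rho'\>=\<m+e,\rho'\>\geq0$. Because $\<e,\rho_0\>=-1<0$, the vector $\rho_0$ cannot lie in the relative interior of a face of dimension $\geq2$: every ray generator of such a face would be non-proportional to $\rho_0$, hence satisfy $\<e,\cdot\>\geq0$, forcing $\<e,\rho_0\>\geq0$. Thus $\rho_0=t\rho$ for the primitive generator $\rho$ of some ray, with $t\in\ZZ_{>0}$, and $t\,\<e,\rho\>=-1$ with $\<e,\rho\>\in\ZZ$ forces $t=1$. Hence $\<e,\rho\>=-1$ and $\<e,\rho'\>\geq0$ for the remaining rays, i.e. $e\in\sigma_{\rho}^{\vee}\cap\{\<\cdot,\rho\>=-1\}=S_{\rho}$ is a Demazure root, and $\bar\p=f\,\p_{\rho,e}$.

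It then remains to identify $f$. For every $m$ with $\<m,\rho\>\geq1$ and every $a\in A_m$ the relation $\p(a\chi^m)=f\,\<m,\rho\>\,a\,\chi^{m+e}\in A$ gives $f\,a\in A_{m+e}$, the scalar $\<m,\rho\>$ being a nonzero integer; that is, $fA_m\subseteq A_{m+e}$, while on the facet $\<m,\rho\>=0$ the derivation $\p_{\rho,e}$ vanishes and imposes no condition. Thus $f\in\Phi_e^{\times}$, and restricting $\bar\p=f\,\p_{\rho,e}$ to $A$ yields $\p=\p_{\rho,e,f}$, as claimed. I expect the genuine obstacle to be the middle step, the combinatorial argument forcing $\rho_0$ onto a single ray with the exact normalization $\<e,\rho\>=-1$: it is there that one must combine local nilpotency (integrality of $\<\cdot,\rho_0\>$ on $\omega_M$), invariance of $A$ (the facet inequalities $\<e,\rho'\>\geq0$), and primitivity of the lattice generator $\rho$; the integrally-closed hypothesis is what secures the reduction of the first paragraph through the cited Lemma.
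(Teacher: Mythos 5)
The paper offers no proof of this statement at all: it is quoted from Liendo \cite[Theorem 2.4]{AL-2}, so there is no internal argument to compare against. Your proposal is, as far as I can check, a correct self-contained derivation, and it follows the same overall strategy as Liendo's original proof: pass to the $K_0$-algebra $K_0[\omega_M]$ via the quoted Lemma, note that a homogeneous $K_0$-derivation of $K_0[M]$ of degree $e$ is encoded by an additive map $c\colon M\to K_0$, extract from local nilpotency the integrality and nonnegativity of $\ell(m)=-c_m/c_e$ on $\omega_M$ to produce $\rho_0\in\sigma\cap N$ with $\langle e,\rho_0\rangle=-1$, and then use invariance of $A$ to force $\rho_0$ onto a primitive extremal ray. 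Two steps deserve an explicit line of justification. First, the claim that the weights $\{m: A_m\neq 0\}$ span the facet $\omega\cap(\rho')^{\perp}$ should be derived from finite generation: the weight monoid is generated by the weights of a finite homogeneous generating set of $A$, and a supporting hyperplane of $\mathrm{cone}(v_1,\dots,v_N)$ meets it exactly in the cone on those $v_i$ lying on the hyperplane; arguing via extremal rays alone is not quite enough, since $\omega$ is full-dimensional but need not be pointed. Second, your verification that $f\in\Phi_e$ only checks $fA_m\subseteq A_{m+e}$ for $\langle m,\rho\rangle\geq 1$; this is the only tenable reading of the paper's loosely stated definition of $\Phi_e$ (for $\langle m,\rho\rangle=0$ one has $m+e\notin\omega_M$, so $A_{m+e}=0$ and a literal reading would force $f=0$ already at $m=0$), but it is worth saying so explicitly, as you do. With these glosses your argument is complete and correctly recovers $\p=\p_{\rho,e,f}$ with $e\in S_\rho$ and $f=-c_e\in\Phi_e^{\times}$.
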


In particular, for any homogeneous locally nilpotent derivation $\p$ of fiber type we have $\deg \p\notin\omega_M$.

\begin{remark}
For a toric variety $X$ the closure of a dense $\TT$-orbit coincides with $X$, hence all
derivations on $A$ are of fiber type. In this particular case
Theorem $\ref{HLND}$ states that all nonzero homogeneous LNDs on $A=\kk[X]$ are
of the form $\p=\lambda\p_{\rho,e}$, for some $\lambda\in \kk^{\times}$.
\end{remark}

%\begin{theorem}\label{HLND}(2.2.6, 2.2.7 in \cite{AL-1})
%Nonzero homogeneous LNDs on $A=k[\omega_M]$ are nothing but
%derivations of the following form: $\p=\lambda\p_{\rho,e}$ for some
%ray $\rho\subseteq\sigma$, some Demazure root $e\in S_{\rho}$, and
%some $\lambda\in k^*$.
%\end{theorem}

\begin{example}\label{ex-1}
With $N=\ZZ^n$ we let $\sigma$ be the cone in $N_{\QQ}$ spanned by
the basic unit vectors $e_i$. The dual cone $\omega\subseteq M_{\QQ}$ is
spanned by the basic unit vectors in $M_{\QQ}$ as well. The
corresponding semigroup algebra is a polynomial algebra $A=\kk[x_1,\dots,x_n],\ \deg x_i=e_i$, and the affine toric variety is $X=\AA^n$. According to Theorem $\ref{HLND}$ all nonzero homogeneous
LNDs on $A$ are:
$$
\p=\lambda x_1^{i_1}\dots \widehat{x_k^{i_k}}\dots
x_n^{i_n}\frac{\p}{\p x_k}, \quad \lambda\in \kk^{\times},\ i_1,\dots,i_n\in
\ZZ_{\geqslant 0} \quad (k=1,\dots,n).
$$

Degree of the preceding derivation is $(i_1,\dots,-1,\dots,i_n)$,
which is a Demazure root of~$\sigma$ corresponding to the ray
$\rho_k$.
\end{example}

\section{Commutators of two homogeneous LNDs}
\label{sec4}

Suppose $\p_1,\p_2\in \LND(A)$ are nonzero homogeneous derivations of
fiber type. It follows from Theorem $\ref{HLND}$ that they are
given by
$$
\p_1(\chi^m)=f_1\<m,\rho_1\>\chi^{m+e_1},\qquad
\p_2(\chi^m)=f_2\<m,\rho_2\>\chi^{m+e_2},
$$
where $e_1,e_2\in\Re$ and $f_1\in \Phi_{e_1}^{\times}$, $f_2\in \Phi_{e_2}^{\times}$.
According to the definition, we have
\begin{multline*}
[\p_1,\p_2](\chi^m)=f_1f_2(\<m+e_2,\rho_1\>\<m,\rho_2\>-\<m+e_1,\rho_2\>\<m,\rho_1\>)\chi^{m+e_1+e_2}=\\
=f_1f_2(\<e_2,\rho_1\>\<m,\rho_2\>-\<e_1,\rho_2\>\<m,\rho_1\>)\chi^{m+e_1+e_2}.
\end{multline*}

\begin{lemma}\label{sameray}
If $\p_1,\p_2\in \LND(A)$ correspond to the same ray of the cone
$\sigma$, then $\p_1$ and~$\p_2$ commute.
\end{lemma}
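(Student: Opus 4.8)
The plan is to reduce everything to the commutator formula already displayed just above the statement and to exploit the one normalization built into the notion of a Demazure root attached to a fixed ray. First I would invoke Theorem \ref{HLND} under the hypothesis $\rho_1=\rho_2=:\rho$: both derivations then take the canonical shape $\p_i(\chi^m)=f_i\<m,\rho\>\chi^{m+e_i}$ with $e_i\in S_\rho$ and $f_i\in\Phi_{e_i}^{\times}$. The crucial structural input is the definition of $S_\rho$, according to which every Demazure root $e$ supported on $\rho$ satisfies $\<e,\rho\>=-1$. Hence $\<e_1,\rho\>=\<e_2,\rho\>=-1$, no matter which two roots $e_1,e_2$ were selected.

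Next I would simply specialize the general computation preceding the lemma. After the symmetric quadratic term $\<m,\rho_1\>\<m,\rho_2\>$ cancels, that computation yields
$$[\p_1,\p_2](\chi^m)=f_1f_2\bigl(\<e_2,\rho_1\>\<m,\rho_2\>-\<e_1,\rho_2\>\<m,\rho_1\>\bigr)\chi^{m+e_1+e_2}.$$
Setting $\rho_1=\rho_2=\rho$ and inserting $\<e_1,\rho\>=\<e_2,\rho\>=-1$, the bracket collapses to $(-1)\<m,\rho\>-(-1)\<m,\rho\>=0$. Thus $[\p_1,\p_2]$ annihilates every monomial $\chi^m$.

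Finally I would upgrade this vanishing on monomials to vanishing of the commutator as a derivation. Since $\p_1,\p_2$ are of fiber type they kill $\kk(X)^{\TT}$, and in particular the degree-zero coefficients $f_1,f_2\in\kk(X)^{\TT}$; consequently each $\p_i$, and therefore $[\p_1,\p_2]$, is $\kk(X)^{\TT}$-linear on the extended algebra $\kk(X)^{\TT}[M]$ recalled at the beginning of Section \ref{sec3}. As the monomials $\{\chi^m\}_{m\in M}$ span $\kk(X)^{\TT}[M]$ over $\kk(X)^{\TT}$, vanishing on all $\chi^m$ forces $[\p_1,\p_2]=0$ on $\kk(X)^{\TT}[M]$, and restricting back gives $[\p_1,\p_2]=0$ on $A$.

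I do not expect a genuine obstacle: the entire argument is driven by the single constraint $\<e,\rho\>=-1$ imposed by the definition of a Demazure root, which makes the two terms of the commutator identical. The only point deserving a line of care is the last paragraph, namely verifying that the commutator is honestly $\kk(X)^{\TT}$-linear (so that its values on the $\chi^m$ determine it) rather than merely coinciding with $0$ on a spanning set by coincidence; this is immediate once one observes that $f_1,f_2\in\kk(X)^{\TT}$ and that any derivation annihilating a subring is automatically linear over that subring.
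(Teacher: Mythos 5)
Your proposal is correct and follows essentially the same route as the paper: specialize the displayed commutator formula to $\rho_1=\rho_2=\rho$ and use $\<e_1,\rho\>=\<e_2,\rho\>=-1$ to see the two terms cancel. The extra paragraph on $\kk(X)^{\TT}$-linearity is a harmless (and valid) elaboration of what the paper leaves implicit.
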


\begin{proof}
Let $\rho_1=\rho_2=\rho$. Then
$\<e_2,\rho\>\<m,\rho\>-\<e_1,\rho\>\<m,\rho\>=(-1)\<m,\rho\>-(-1)\<m,\rho\>=0$,
and hence the commutator equals zero.
\end{proof}

\begin{proposition}\label{commutator}
Suppose $\p_1,\p_2\in \LND(A)$ correspond to different rays $\rho_1,\rho_2$ of the
cone $\sigma$. Then the following holds:
\begin{itemize}
\item[1)] $[\p_1,\p_2]=0 \Leftrightarrow \<e_1,\rho_2\>=0$ and
$\<e_2,\rho_1\>=0$,
\item[2)] $[\p_1,\p_2]\in \LND(A)\Leftrightarrow\ e_1+e_2\notin\omega_M$, i.e. $\<e_1,\rho_2\>=0$ or $\<e_2,\rho_1\>=0$.
\end{itemize}
\end{proposition}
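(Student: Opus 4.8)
The plan is to read everything off the commutator formula computed just above the statement, together with the defining inequalities of Demazure roots. Throughout I would abbreviate $a=\<e_2,\rho_1\>$ and $b=\<e_1,\rho_2\>$. Since $e_1\in S_{\rho_1}$, $e_2\in S_{\rho_2}$ and $\rho_1\ne\rho_2$, the definition of $S_\rho$ gives $a\ge 0$ and $b\ge 0$, while $\<e_1,\rho_1\>=\<e_2,\rho_2\>=-1$. The first step is to translate membership in the cone: pairing $e_1+e_2$ with the rays of $\sigma$ yields $\<e_1+e_2,\rho_1\>=a-1$, $\<e_1+e_2,\rho_2\>=b-1$, and $\<e_1+e_2,\rho'\>\ge 0$ for every remaining ray $\rho'$. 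As $\omega=\sigma^{\vee}$, membership $e_1+e_2\in\omega_M$ is then equivalent to $a\ge 1$ and $b\ge 1$, so that $e_1+e_2\notin\omega_M$ is equivalent to $a=0$ or $b=0$. This already justifies the reformulation asserted in part 2).

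For part 1), the commutator formula shows that $[\p_1,\p_2]$ sends $\chi^m$ to $f_1f_2(a\<m,\rho_2\>-b\<m,\rho_1\>)\chi^{m+e_1+e_2}$. Since $f_1f_2\ne 0$ in the field $K$, the commutator vanishes exactly when the linear functional $m\mapsto a\<m,\rho_2\>-b\<m,\rho_1\>=\<m,\,a\rho_2-b\rho_1\>$ is identically zero, i.e. when $a\rho_2-b\rho_1=0$ in $N_{\QQ}$. Here I would use that two distinct rays of the \emph{pointed} cone $\sigma$ are linearly independent (a proportionality $\rho_2=c\rho_1$ with $c>0$ would make them equal, and with $c<0$ would place a whole line inside $\sigma$). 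Hence $a\rho_2=b\rho_1$ forces $a=b=0$, giving $[\p_1,\p_2]=0\Leftrightarrow \<e_1,\rho_2\>=0$ and $\<e_2,\rho_1\>=0$.

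For the implication $\Leftarrow$ in part 2), assume $e_1+e_2\notin\omega_M$, that is $a=0$ or $b=0$. If both vanish, part 1) gives $[\p_1,\p_2]=0\in\LND(A)$. Otherwise, say $b=0$ and $a\ge 1$ (the case $a=0$ is symmetric). Then the formula collapses to $[\p_1,\p_2](\chi^m)=af_1f_2\<m,\rho_2\>\chi^{m+e_1+e_2}$, which is $af_1f_2\cdot\p_{\rho_2,\,e_1+e_2}$ restricted to $A$. The key checks I would carry out are that $e_1+e_2$ is again a Demazure root for $\rho_2$ — indeed $\<e_1+e_2,\rho_2\>=b-1=-1$ and all other pairings are $\ge 0$ by the computation above, so $e_1+e_2\in S_{\rho_2}$ — and that $g:=af_1f_2$ lies in $\Phi_{e_1+e_2}^{\times}$, which follows since $f_1A_m\subseteq A_{m+e_1}$ and $f_2A_m\subseteq A_{m+e_2}$ give $f_1f_2A_m\subseteq A_{m+e_1+e_2}$, and $g\ne 0$. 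Thus $[\p_1,\p_2]=\p_{\rho_2,\,e_1+e_2,\,g}$ is exactly one of the derivations produced in Theorem $\ref{HLND}$, hence an LND.

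For $\Rightarrow$ in part 2), I would argue by contraposition. If $e_1+e_2\in\omega_M$ then $a\ge 1$ and $b\ge 1$, so by part 1) the derivation $[\p_1,\p_2]$ is nonzero and homogeneous of degree $e_1+e_2$, and it is of fiber type because it annihilates $\kk(X)^{\TT}$ (both $\p_1$ and $\p_2$ do). By the remark recorded after Theorem $\ref{HLND}$, a nonzero homogeneous LND of fiber type has degree outside $\omega_M$, since its degree is a Demazure root pairing to $-1$ with some ray and hence lying outside $\sigma^{\vee}=\omega$. As $\deg[\p_1,\p_2]=e_1+e_2\in\omega_M$, the commutator cannot be locally nilpotent. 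The only genuinely delicate point is the $\Leftarrow$ direction, where one must confirm that the surviving single term is really one of Liendo's canonical derivations; linear independence of distinct rays and the degree-outside-$\omega_M$ obstruction are the two structural facts doing the real work.
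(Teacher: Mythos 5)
Your proposal is correct and follows essentially the same route as the paper: the commutator formula, the verification that $e_1+e_2$ is a Demazure root for $\rho_2$ with coefficient $\<e_2,\rho_1\>f_1f_2\in\Phi_{e_1+e_2}^{\times}$ in the sufficiency direction, and the degree obstruction $\deg\p\notin\omega_M$ for fiber-type LNDs in the necessity direction. The only (harmless) variations are in part 1), where you use linear independence of two distinct rays of the pointed cone $\sigma$ together with the fact that $\omega_M$ spans $M_{\QQ}$, whereas the paper evaluates the functional at a specific $m\in\omega_M$ with $\<m,\rho_1\>=0$ and $\<m,\rho_2\>>0$.
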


\begin{proof}
1) Let $[\p_1,\p_2]=0$. Then for all $m\in\omega_M$ we have
$\<e_2,\rho_1\>\<m,\rho_2\>=\<e_1,\rho_2\>\<m,\rho_1\>$. One can
choose $m\in \omega_M$ such that $\<m,\rho_1\>=0$ and
$\<m,\rho_2\>>0$. In this case $\<e_2,\rho_1\>\<m,\rho_2\>=0$
implies that $\<e_2,\rho_1\>=0$. Similarly $\<e_1,\rho_2\>=0$. The
inverse implication holds automatically.

2) Let us prove sufficiency. Without loss of generality assume
$\<e_1,\rho_2\>=0,\ \<e_2,\rho_1\>>0$. Then $\<e_1+e_2,\rho_2\>=-1,\
\<e_1+e_2,\rho_1\>\geqslant 0$. Therefore $e_1+e_2$ is a Demazure
root corresponding to the ray $\rho_2$ of the cone $\sigma$ and
$[\p_1,\p_2]=f \p_{\rho_2,e_1+e_2}$, where
$f=f_1f_2\<e_2,\rho_1\>\in \Phi_{e_1+e_2}^{\times}$.

To prove necessity assume that the commutator is an LND. Since its
degree equals $e_1+e_2$, there exists a ray $\rho^*$ of the cone
$\sigma$ such that $\<e_1+e_2,\rho^*\>=-1$. As $e_1,e_2\in
\sigma_{\rho}^{\vee}$ for $\rho\ne\rho_1, \rho_2$, one obtains
$\rho^*=\rho_1$ or $\rho_2$. This proves the assertion.
\end{proof}

\begin{remark}\label{deg_notin_cone}
In general, if $\p$ is a homogeneous derivation of a graded algebra $A=\bigoplus_{m\in \omega_M}\widetilde{A_m}$, and  $\deg\p\notin\omega_M$, then $\p$ is locally nilpotent. Indeed, for any $m\in\omega_M$ one can find $k\in\ZZ_{\geqslant 1}$ such that $(m+k\deg\p)\notin\omega_M$ and, thus, $\p^k(\widetilde{A_m})=0$.
\end{remark}

\section{Sums of two homogeneous LNDs}
\label{sec5}

In this section we establish a necessary and sufficient condition
of local nilpotency of the sum of two homogeneous LNDs. Consider two nonzero homogeneous LNDs $\p_1, \p_2$ of fiber type:
$$
\p_1(\chi^m)=f_1\<m,\rho_1\>\chi^{m+e_1},\qquad
\p_2(\chi^m)=f_2\<m,\rho_2\>\chi^{m+e_2},
$$
where $e_1,e_2\in\Re$ and $f_1\in \Phi_{e_1}^{\times}$, $f_2\in \Phi_{e_2}^{\times}$. Note that their sum is homogeneous if and only if both $\p_1$ and $\p_2$ are of
the same degree, i.e. $e_1=e_2$.

\begin{theorem}\label{main_theorem}
In the above notation let $\p_1$ and $\p_2$ be homogeneous LNDs of
fiber type. Then $\p_1+\p_2\in \LND(A)$ if and only if
$e_1+e_2\notin\omega_M$.
\end{theorem}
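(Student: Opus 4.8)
The plan is to reduce the statement to a purely combinatorial question about lattice walks and then to exploit a positivity phenomenon that kills all cancellation. Assume first $e_1\neq e_2$ (if $e_1=e_2$ then necessarily $\rho_1=\rho_2$, the sum is homogeneous of degree $e_1\notin\omega_M$, and Remark \ref{deg_notin_cone} finishes it immediately). Iterating the two defining formulas and pulling the scalars $f_1,f_2\in\kk(X)^{\TT}$ out, one obtains
\[ \p^N(\chi^m)=\sum_{i+j=N} f_1^{\,i} f_2^{\,j}\,C_{i,j}(m)\,\chi^{m+ie_1+je_2}, \]
where $C_{i,j}(m)\in\ZZ$ is the sum, over all words in $\{1,2\}$ with $i$ ones and $j$ twos, of the products of the integer factors $\langle m_t,\rho_1\rangle$ or $\langle m_t,\rho_2\rangle$ collected along the way ($m_t$ denoting the intermediate degree). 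Since $e_1\neq e_2$, the exponents $m+ie_1+je_2$ are pairwise distinct, so there is no cancellation \emph{between} different pairs $(i,j)$, and each $f_1^{\,i}f_2^{\,j}$ is nonzero in the field $\kk(X)^{\TT}$.

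The main point of the proof — and the step I expect to be the real obstacle — is to show that there is no cancellation \emph{within} a fixed $(i,j)$ either. A word contributes a nonzero term precisely when every intermediate degree $m_t$ stays in $\omega_M$: a vanishing factor $\langle m_t,\rho\rangle=0$ is exactly what forces $m_t+e$ to leave $\omega$ (using that $\langle e,\rho'\rangle\ge 0$ for all rays $\rho'$ distinct from the one $e$ belongs to). For such a surviving word every factor $\langle m_t,\rho\rangle$ is then a \emph{positive} integer, because $m_t\in\omega_M$. Hence $C_{i,j}(m)\ge 0$, and $C_{i,j}(m)>0$ if and only if there exists an admissible walk from $m$ to $m+ie_1+je_2$ inside $\omega_M$, a step $+e_1$ (resp.\ $+e_2$) being permitted only when the current value of $\langle\cdot,\rho_1\rangle$ (resp.\ $\langle\cdot,\rho_2\rangle$) is positive. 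Consequently $\p_1+\p_2\in\LND(A)$ if and only if, for every $m\in\omega_M$, all such walks have bounded length, equivalently (by a K\"onig-type argument, the walk-tree being binary) there is no infinite admissible walk.

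For necessity I would argue the contrapositive. Suppose $e_1+e_2\in\omega_M$. Then $\rho_1\neq\rho_2$, for otherwise $\langle e_1+e_2,\rho_1\rangle=-2$; and from $e_1+e_2\in\sigma^{\vee}$ together with $\langle e_i,\rho_i\rangle=-1$ one gets $\langle e_2,\rho_1\rangle\ge 1$ and $\langle e_1,\rho_2\rangle\ge 1$. Choose a lattice point $m$ in the interior of $\omega$, so that $\langle m,\rho\rangle\ge 1$ for every ray $\rho$ and (taking $m$ in the subsemigroup generating $\omega$) $A_m\neq 0$. The alternating walk $+e_1,+e_2,+e_1,+e_2,\dots$ is then admissible forever: a direct check shows it keeps $\langle\cdot,\rho_1\rangle$ and $\langle\cdot,\rho_2\rangle$ at least $1$, while for every other ray the pairing only increases. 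This infinite walk gives $C_{i,j}(m)>0$ for infinitely many $(i,j)$; hence, for any nonzero $\phi\in A_m$, the element $\phi\chi^m\in A$ satisfies $\p^N(\phi\chi^m)=\phi\,\p^N(\chi^m)\neq 0$ for all $N$, so $\p_1+\p_2\notin\LND(A)$.

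For sufficiency I would avoid the walk count and give a direct filtration argument. Suppose $e_1+e_2\notin\omega_M$. If $\rho_1=\rho_2$, then in the grading of $A$ by $\langle\cdot,\rho_1\rangle\in\ZZ_{\ge 0}$ both $\p_1$ and $\p_2$ have degree $-1$, so $\p$ strictly lowers the top degree and is nilpotent on each element. If $\rho_1\neq\rho_2$, then by Proposition \ref{commutator} one pairing vanishes, say $\langle e_1,\rho_2\rangle=0$; grade $A=\bigoplus_{d\ge 0}A^{(d)}$ by $d=\langle\cdot,\rho_2\rangle$, so that $\p_1$ has $d$-degree $0$ and $\p_2$ has $d$-degree $-1$. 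For $g$ with top component $g_D$ one checks $(\p^n g)_D=\p_1^{\,n}(g_D)$, which vanishes for $n\gg 0$ because $\p_1\in\LND(A)$; this lowers the top $d$-degree, and induction on $D$ (the base $D=0$ being $\p|_{A^{(0)}}=\p_1|_{A^{(0)}}$, since $\p_2$ annihilates $d$-degree $0$) yields $\p_1+\p_2\in\LND(A)$. In walk language this just reflects that $e_2$-steps are bounded by $\langle m,\rho_2\rangle$ and then $e_1$-steps by $\langle m,\rho_1\rangle+\langle m,\rho_2\rangle\langle e_2,\rho_1\rangle$, so no walk can be infinite.
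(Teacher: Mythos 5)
Your proof is correct, and in the harder (necessity) direction it takes a genuinely different and more elementary route than the paper's. For sufficiency the two arguments essentially coincide: your filtration of $A$ by the $\ZZ_{\geqslant 0}$-grading $\<\cdot,\rho_2\>$ and induction on the top degree is the paper's induction on $\<m,\rho_1\>$ (with $\<e_2,\rho_1\>=0$) up to relabeling, and your handling of the same-ray case via the degree $-1$ filtration replaces the paper's appeal to Lemma \ref{sameray}. For necessity, however, the paper first settles $\mathrm{rk}\,M=2$ by invoking Kotenkova's classification of $T$-roots, then reduces the general case to rank two by restricting to the plane spanned by $e_1,e_2$, and in the degenerate case where that plane meets $\omega$ in a line it passes to $\overline{\kk(X)^{\TT}}$ and exhibits a non-nilpotent element as an eigenvector of a tridiagonal matrix whose trace is shown to be nonzero. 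Your positivity observation short-circuits all of this: since a word in the expansion of $(\p_1+\p_2)^N$ survives only if every intermediate degree stays in $\omega$ (a Demazure root $e\in S_\rho$ pairs nonnegatively with every ray except $\rho$, where a nonzero factor $\<m_t,\rho\>\geqslant 1$ absorbs the $-1$), each $C_{i,j}(m)$ is a sum of products of positive integers; the distinctness of the exponents $m+ie_1+je_2$ and the nonvanishing of $f_1^{\,i}f_2^{\,j}$ then forbid any cancellation, so the single alternating walk --- admissible forever once $\<e_1,\rho_2\>\geqslant 1$, $\<e_2,\rho_1\>\geqslant 1$ and $m$ is an interior weight --- already certifies $(\p_1+\p_2)^N(\phi\chi^m)\neq 0$ for all $N$. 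This buys a self-contained proof with no external classification, no base change to the algebraic closure, and no eigenvalue computation, and it isolates the actual mechanism behind the theorem. In a final write-up you should make explicit the two small facts you use in passing: that an interior lattice point $m$ with $A_m\neq 0$ exists (sum weights of $A$ spanning the full-dimensional cone $\omega$), and that it suffices to test $\Nil(\p_1+\p_2)$ on the homogeneous elements $\phi\chi^m$, since $\Nil$ of a single derivation is a $\kk$-linear subspace of $A$.
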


\begin{proof}
Let us prove sufficiency. The condition $e_1+e_2\notin\omega_M$ means that either
both $\p_1$ and $\p_2$ correspond to the same ray, or
$\<e_2,\rho_1\>=0$ (up to permutation of indices). In the first
case according to Lemma~$\ref{sameray}$ our derivations
commute and hence their sum is an
LND.

Let us consider the second case. It suffices to show that
for any $m\in \omega_M$ there exists $n\in\mathbb{Z}_{\geqslant 0}$ such that $(\p_1+\p_2)^n(\chi^m)=0$. We proceed by induction on the
parameter $\<m,\rho_1\>$.

Let $\<m,\rho_1\>=0$. This means that $m\in\rho_1^\perp\cap\omega_M$ and
$(\p_1+\p_2)(\chi^m)=\p_2(\chi^m)$. In addition, the equality
$\<e_2,\rho_1\>=0$ implies that $m+e_2\in\rho_1^\perp\cap\omega_M$.
Continuing these arguments and using local nilpotency of $\p_2$,
we obtain the required condition.

Now consider an arbitrary point $m\in\omega_M$. Since $\p_2\in \LND(A)$, there exists
$n\in\mathbb{Z}_{\geqslant 0}$ such that $\p_2^n(\chi^m)=0$.
Therefore all the images of $m$ under powers of $\p_2$, namely
$m,m+e_2,\dots,m+(n-1)e_2$, lie in the hyperplane $H$ perpendicular
to $\rho_1$ and given by the equation $\<x,\rho_1\>=\<m,\rho_1\>$.
Now we apply $(\p_1+\p_2)^n$ to $\chi^m$. One can easily see that
applying $\p_1$ to an arbitrary element of the hyperplane $H$, the image will be in the hyperplane given by the equation $\<x,\rho_1\>=\<m,\rho_1\>-1$. Since each summand (except for
$\p_2^n$) in $(\p_1+\p_2)^n$ contains $\p_1$ and $\p_2^n(\chi^m)=0$
and using the inductive hypothesis, we obtain local nilpotency of the
sum $\p_1+\p_2$.

Now let us prove the inverse implication. If
$\mathrm{rk}\,M=1$, then $\omega_M=\ZZ$ or $\ZZ_{\geqslant 0}$ and the
dual cone $\sigma$ is $0$ or $\QQ_{\geqslant 0}$ respectively.
In the first case $\sigma$ contains no ray, thus there are no
derivations of fiber type on $\kk[X]$. In the second case $\sigma$
consists of one ray and the condition $e_1+e_2\notin \omega_M$
follows immediately.

The case $\mathrm{rk}\,M=2$ follows from results
of P.~Kotenkova, see \cite{PK}. We recall some notation and results
from this article. Consider a one-dimensional subtorus
$T\subset\TT$ given by the equation $e_1-e_2=0$. The torus $T$ also
acts on $X$ and every $\TT$-homogeneous LND on $\kk[X]$ is
$T$-homogeneous as well. Recall that a $\TT$-root of an $M$-graded algebra $A$ is the degree of some homogeneous LND on $A$. Any $\TT$-root of $A=\kk[X]$ can be restricted to some $T$-root and we denote by $\pi$ the restriction map. Denote by $\Gamma_T\subseteq N_{\QQ}$ the
hyperplane, corresponding to the subtorus $T$. Let
$\<\cdot,m_T\>=0$ be the equation of the hyperplane $\Gamma_T$,
where $m_T \in M$. Evidently, the locally nilpotent derivation
$\p_1+\p_2$ restricts to a homogeneous LND with respect to the torus
$T$. Our case corresponds to Case 3.3 in \cite[Proposition 6]{PK}. It follows that all $T$-homogeneous LNDs of degree $\pi(e_1)$ are of the following form (see \cite[Proposition~5]{PK}):

$$
\p(\chi^m)=\chi^{m+e_2}(\alpha\<\rho_1,m\>\chi^{m_T}+\beta\<\rho_2,m\>)(\alpha\chi^{m_T}+\beta\<\rho_2,m_T\>)^{\<\rho_1,e_2\>},
$$
for some $\alpha, \beta\in \kk$. Now one can easily see that for
$\p=\p_1+\p_2$ the exponent $\<\rho_1,e_2\>$ has to vanish, so $\<\rho_1,e_1+e_2\>=-1$ and $e_1+e_2\notin\omega_M$.

Let us consider the general case $\mathrm{rk}M=n$.

\begin{lemma}\label{extension to alg.closure}
Denote by $\bar{k}$ the algebraic closure of a field $k$. The
derivation $\p$ on a $k$-algebra $A$ extends in a unique way to a
derivation $\tilde{\p}$ on $A\otimes_k \bar{k}$. In addition, $\p$
is locally nilpotent if and only if $\tilde{\p}$ is.
\end{lemma}

\begin{proof}
To prove the first assertion of the lemma we let $x\in\bar{k}$ and assume that $p(t)$ is the
minimal polynomial of $x$. Applying $\tilde{\p}$ to the
equation $p(x)=0$, we obtain $\tilde{\p}(x)p'(x)=0$, hence
$\tilde{\p}(x)=0$. Further, an extension of $\p$ to $A\otimes_k \bar{k}$ by linearity
is unique. Since
$\bar{k}\subset\Ker(\tilde{\p})$, local nilpotency of
$\tilde{\p}$ and $\p$ are equivalent.
\end{proof}

According to the previous lemma we can replace $\kk(X)^{\TT}$ with its algebraic closure $\overline{\kk(X)^{\TT}}$.

It is required to show that $e_1+e_2\notin \omega_M$. If the roots
$e_1,e_2$ correspond to the same ray, the condition holds
automatically. If $e_1,e_2$ correspond to different rays $\rho_1$
and $\rho_2$, the required condition is equivalent to
the following one: at least one of expressions $\<e_1,\rho_2\>$ or
$\<e_2,\rho_1\>$ vanishes. We carry out the proof by contradiction.

We assume that the vectors $e_1$ and $e_2$ are not collinear. In this case for
any $m\in \omega_M$ we denote by $\gamma_m$ a two-dimensional plane
passing through $m$ and spanned by vectors $e_1,e_2$. It is obvious
that images of $m$ under powers of the derivation
$\p_1+\p_2$ are in $\omega_M\cap\gamma_m$. It follows from the formulae
defining $\p_1$ and $\p_2$ that
$\gamma_m\cap\rho_1^\bot\ne\varnothing$ and
$\gamma_m\cap\rho_2^\bot\ne\varnothing$. If
$\gamma_m\subset\rho_1^\bot$ then $\<e_2,\rho_1\>=0$
and similarly for another permutation of the indices. Otherwise
$\mathrm{dim}\,(\gamma_m\cap\rho_1^\bot)=\mathrm{dim}\,(\gamma_m\cap\rho_2^\bot)=1$.

Now we assume that $m=e_1+e_2$. Denote the lines in which the plane
$\gamma_m$ intersects $\rho_1^{\perp}$ and $\rho_2^{\perp}$ by $l_1$
and $l_2$ respectively. It follows from local nilpotency of
$\p_1,\ \p_2$ that there exist $k_1,k_2\in\ZZ_{\geqslant 0}$ such
that
$$
m_1=e_1+e_2+k_1e_1\in\rho_1^{\perp}\cap\gamma_m, \qquad
m_2=e_1+e_2+k_2e_2\in\rho_2^{\perp}\cap\gamma_m.
$$
Obviously, $0\in\rho_1^{\perp}\cap\rho_2^{\perp}\cap\gamma_m$. Thus,
$l_1$ and $l_2$ are precisely the lines passing through $0$ and
$m_1$ or~$m_2$ respectively. If $l_1$ and $l_2$ are different, we set
$\omega_m=\gamma_m\cap\omega$. The two-dimensional
cone~$\sigma_m$ dual to $\omega_m$ can be considered as embedded in $N_{\QQ}$ and
spanned by the rays $\rho_1,\ \rho_2$. One can easily see that $e_1$
and $e_2$ are Demazure roots of $\sigma_m$. A locally nilpotent
derivation on $\overline{\kk(X)^{\TT}}[\omega_M]$ can be restricted to an LND on
$\overline{\kk(X)^{\TT}}[\omega_m\cap M]$, hence using the assertion of the theorem
in case of dimension two we obtain $e_1+e_2\notin \omega_m\cap
M$, a contradiction.

If the lines $l_1$ and $l_2$ coincide, then the vectors $e_1+e_2+k_1e_1$ and
$e_1+e_2+k_2e_2$ are collinear. This means the following:
$$
\frac{k_1+1}{1}=\frac{1}{k_2+1} \Leftrightarrow (k_1+1)(k_2+1)=1
\Leftrightarrow k_1=k_2=0.
$$
Thus, $e_1+e_2\in \rho_1^{\perp}\cap\rho_2^{\perp}$ and $\gamma_m\cap\omega$ is the line spanned by $e_1+e_2$. Indeed, let
$\<ae_1+be_2,\rho_1\>\geqslant 0$. Using that $\<e_1,\rho_1\>=~-1,\
\<e_2,\rho_1\>=1$ one obtains $b\geqslant a$. Similarly we obtain
$a\geqslant b$ and, hence, $a=b$. Consider now an arbitrary element
$s\in\omega_M^0=\mathrm{Int}(\omega_M)$ and the corresponding
plane~$\gamma_s$. Since $\gamma_s
\parallel \gamma_m$, the plane $\gamma_s$ intersects both $\rho_1^{\perp}$ and
$\rho_2^{\perp}$ in parallel lines $v_1$ and $v_2$ with leading
vector $e_1+e_2$. Our aim now is to construct $f\in
\overline{\kk(X)^{\TT}}[\omega_M]$ such that $f\notin \Nil(\p_1+\p_2)$.

%Consider a two-dimensional plane containing the lines $v_1$ and
%$v_2$, and its intersection with the cone $\omega_M$. We obtain a
%stripe-shaped diagram, see Figure 1.

\begin{figure}[h]\label{fig-stripe}
\centerline{\includegraphics[scale = 0.25]{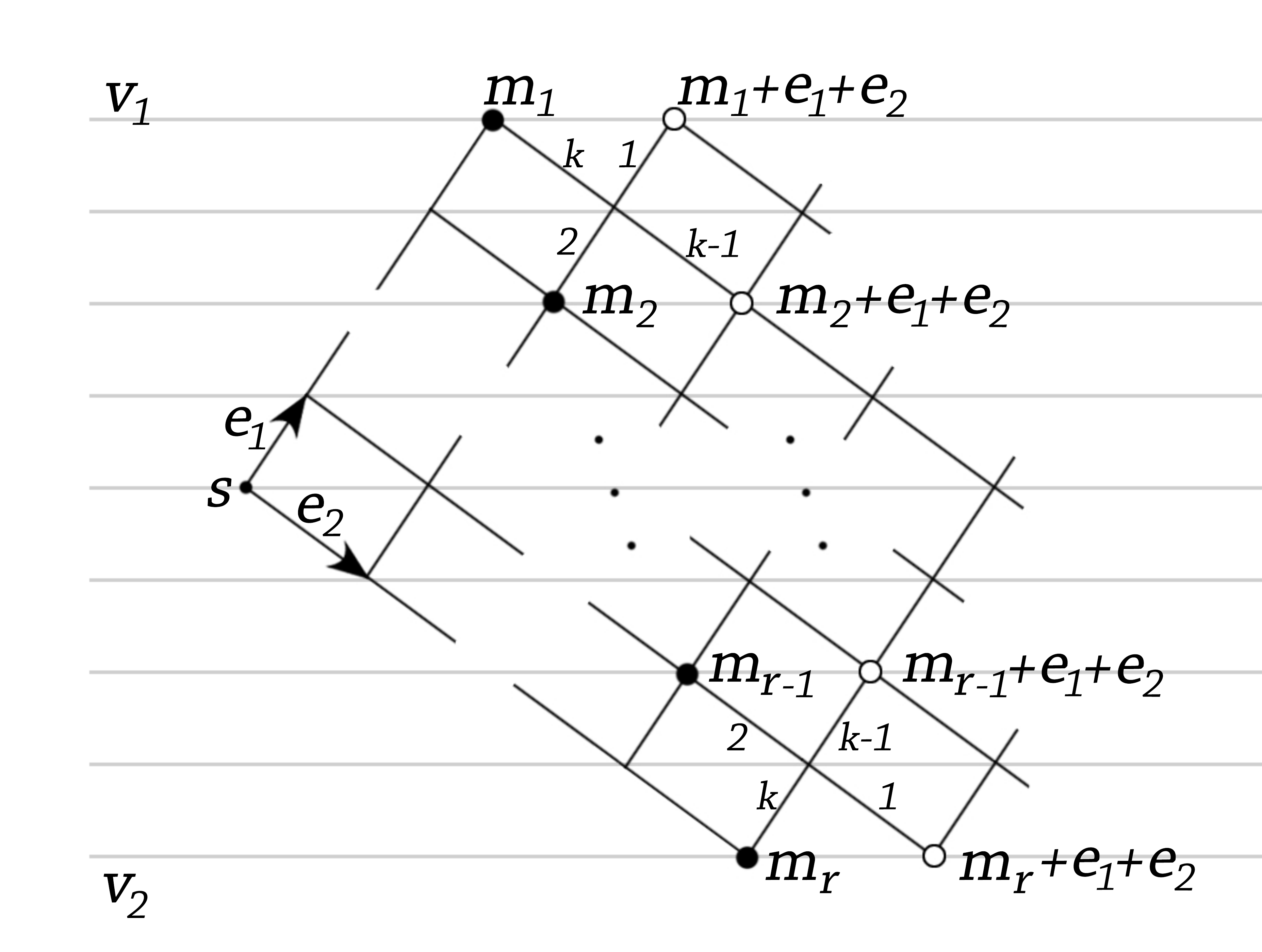}}
\centerline{\figurename{\ 1.} Intersection of $\gamma_s$ and
$\omega_M$}
\end{figure}

Consider a two-dimensional plane containing the lines $v_1$ and
$v_2$, and its intersection with the cone $\omega_M$. We obtain a
stripe-shaped diagram, see Figure 1.

One can also consider a lattice $S$ passing through $s$ with
generating vectors $e_1,e_2$. Since $\<e_i,\rho_j\>=(-1)^{i+j-1}\
(i,j=1,2)$, the derivations $\p_1,\p_2$ send elements of the lattice
$S$ one level up or down respectively. Here levels are of the form
$\{x\in S\ |\ \<x,\rho_2\>=\alpha\},\ \alpha=0,\dots,k$. In particular,
0-level lies on $v_2$ and $k$-level lies on $v_1$. Consider
elements $m_1,\dots,m_r\in S$ as shown on Figure~1. Note that Figure~1
represents the case of even $k$, for odd $k$ arguments are
similar. Let us prove that there exist coefficients $a_1,\dots,a_r\in \overline{\kk(X)^{\TT}}$ such
that
$$
a_1\chi^{m_1}+\dots+a_r\chi^{m_r}\notin \Nil(\p_1+\p_2).
$$
Indeed, we apply $(\p_1+\p_2)^2$ to an element with yet
undetermined coefficients and write the column vector of
coefficients at $\chi^{m_1+e_1+e_2},\dots,
\chi^{m_r+e_1+e_2}$ after decomposition into graded components:

$$
\small
\begin{pmatrix}
a_1 \\ a_2 \\ a_3 \\ \vdots \\ a_{r-1} \\ a_r
\end{pmatrix}
\rightsquigarrow
\begin{pmatrix}
a_1\cdot f_1f_2\cdot k\cdot 1 + a_2\cdot f_1^2\cdot 2\cdot 1 \\
a_2\cdot f_1f_2\cdot ((k-2)\cdot 3 +2\cdot(k-1))+ a_3\cdot f_1^2\cdot 4\cdot 3+a_1\cdot f_2^2\cdot k\cdot (k-1)\\
a_3\cdot f_1f_2\cdot ((k-4)\cdot 5 +4\cdot(k-3))+ a_4\cdot f_1^2\cdot 6\cdot 5+a_2\cdot f_2^2\cdot (k-2)\cdot (k-3)\\
\vdots\\
a_{r-1}\cdot f_1f_2\cdot (2\cdot (k-1)+(k-2)\cdot 3)+ a_r\cdot f_1^2\cdot k\cdot (k-1)+a_{r-2}\cdot f_2^2\cdot 4\cdot 3\\
a_r\cdot f_1f_2\cdot k \cdot 1 + a_{r-1}\cdot f_2^2\cdot 2\cdot 1\\
\end{pmatrix}
$$

We try to find such values $a_1,\dots,a_r$, that these two vectors are
proportional. In other words, $(a_1,\dots,a_r)^T$ has to be an
eigenvector of the following three-diagonal matrix $A$:

$$
\small
\begin{pmatrix}
f_1f_2\cdot k\cdot 1 & * & 0 & 0 & \hdotsfor{1} & 0 \\
* & f_1f_2\cdot ((k-2)\cdot 3 +2\cdot(k-1)) & * & 0 & \hdotsfor{1} & 0 \\
0 & \hdotsfor{1} & \ddots & * & \hdotsfor{1} & \vdots \\
\vdots & \hdotsfor{1} & * & \ddots & \hdotsfor{1} & 0\\
0 & \hdotsfor{1} & 0 & * & f_1f_2\cdot (2\cdot (k-1)+(k-2)\cdot 3) & * \\
0 & \hdotsfor{1} & 0 & 0 & * & f_1f_2\cdot k \cdot 1
\end{pmatrix}
$$

Therefore, it suffices to show that the matrix $A$ possesses a nonzero
eigenvalue $\lambda$, because otherwise applying even powers of
derivation $\p_1+\p_2$ we obtain the following sequence:
$$
\sum_{i=1}^r a_i\chi^{m_i}\longrightarrow \lambda\left(\sum_{i=1}^r
a_i\chi^{m_i+e_1+e_2}\right)\longrightarrow
\lambda^2\left(\sum_{i=1}^r
a_i\chi^{m_i+2(e_1+e_2)}\right)\longrightarrow \dots,
$$
whose members do not vanish and thus $\sum_{i=1}^r
a_i\chi^{m_i}\notin\Nil(\p_1+\p_2)$. We write the characteristic
polynomial of $A$:
$$
\chi(\lambda)=\mathrm{det}(\lambda E -
A)=\lambda^r-\mathrm{tr}A\lambda^{r-1}+\dots+(-1)^r\mathrm{det}A
$$
and compute the coefficient at $\lambda^{r-1}$. One can easily see
that $\mathrm{tr}A$ is a product of two factors, where the first
factor is $f_1f_2$ and the second one is a sum of positive integers. More precisely,

\begin{multline*}
\mathrm{tr}A=f_1f_2\sum_{j=1}^k
j(k+1-j)=f_1f_2\left((k+1)\frac{k(k+1)}{2}-\frac{k(k+1)(2k+1)}{6}\right)=\\
=f_1f_2\frac{k(k+1)(k+2)}{6}\ne0.
\end{multline*}

Hence, $\mathrm{tr}A$ does not vanish and $\chi(\lambda)\ne
\lambda^r$. Therefore we obtain existence of a nonzero $\lambda\in\overline{\kk(X)^{\TT}}$ with $\chi(\lambda)=0$.

Now let $e_1$ and $e_2$ be collinear, i.e. $e_1=te_2,\ t\in \QQ^*$.
Combining the conditions
$$
\<e_1,\rho_2\>=\<te_2,\rho_2\>=-t \Rightarrow t\in\mathbb{Z}_{<0}
$$
and
$$
\<e_2,\rho_1\>=\left\langle\frac{e_1}{t},\rho_1\right\rangle=-\frac1t
\Rightarrow \frac1t\in\mathbb{Z}_{<0},
$$
we see that $t=-1$ and $e_1=-e_2$. The construction from the
previous case allows us to obtain an element of the semigroup
algebra $\overline{\kk(X)^{\TT}}[\omega_M]$ that does not belong to
$\Nil(\p_1+\p_2)$. The proof is completed.
\end{proof}

\section{Concluding remarks}
\label{sec6}

The next corollary follows directly from Lemma $\ref{sameray}$, Proposition
$\ref{commutator}$ and Theorem $\ref{main_theorem}$.

\begin{corollary}
Let $\p_1$ and $\p_2$ be two nonzero homogeneous LNDs on $A$ of
fiber type. Then $\p_1+\p_2$ is a locally nilpotent derivation if
and only if $[\p_1,\p_2]$ is.
\end{corollary}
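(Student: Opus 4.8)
The plan is to observe that both properties in question are governed by the single combinatorial condition $e_1+e_2\notin\omega_M$ on the degrees, so that the corollary follows simply by comparing the criteria already established. First I would record that, by Theorem $\ref{main_theorem}$, we have $\p_1+\p_2\in\LND(A)$ if and only if $e_1+e_2\notin\omega_M$, with no restriction on whether $\p_1$ and $\p_2$ share a ray of $\sigma$. This disposes of the sum entirely and reduces the corollary to showing that $[\p_1,\p_2]\in\LND(A)$ is equivalent to the very same condition.

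Next I would treat the commutator by splitting into two cases according to whether $\p_1$ and $\p_2$ correspond to the same ray or to distinct rays of $\sigma$. In the distinct-ray case, Proposition $\ref{commutator}$(2) gives directly that $[\p_1,\p_2]\in\LND(A)$ if and only if $e_1+e_2\notin\omega_M$, which is exactly the condition supplied by the theorem; combining the two equivalences yields $\p_1+\p_2\in\LND(A)\Leftrightarrow[\p_1,\p_2]\in\LND(A)$ in this case.

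The point requiring care is the same-ray case, since Proposition $\ref{commutator}$ is stated only for distinct rays. Here I would invoke Lemma $\ref{sameray}$: if $\rho_1=\rho_2=\rho$ then $[\p_1,\p_2]=0$, which is trivially an LND, so the commutator side holds unconditionally. To match this against the sum, I would check that $e_1+e_2\notin\omega_M$ holds automatically. Indeed, since $e_1,e_2\in S_\rho$ we have $\<e_1,\rho\>=\<e_2,\rho\>=-1$, whence $\<e_1+e_2,\rho\>=-2<0$; as $\omega=\sigma^\vee$, a lattice vector lying in $\omega_M$ must pair nonnegatively with every ray of $\sigma$, so $e_1+e_2\notin\omega_M$. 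By the theorem this forces $\p_1+\p_2\in\LND(A)$, and both $\p_1+\p_2$ and $[\p_1,\p_2]$ are simultaneously locally nilpotent.

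Assembling the two cases, in every configuration the local nilpotency of $\p_1+\p_2$ and that of $[\p_1,\p_2]$ are each equivalent to $e_1+e_2\notin\omega_M$, hence equivalent to one another. I do not expect a genuine obstacle here: the argument is a bookkeeping synthesis of the preceding results, and the only subtlety is remembering that the commutator criterion is phrased for distinct rays, so the degenerate same-ray configuration must be handled separately via Lemma $\ref{sameray}$ together with the elementary pairing computation above.
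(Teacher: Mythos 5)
Your proof is correct and follows exactly the route the paper intends: the paper states that the corollary ``follows directly from Lemma~\ref{sameray}, Proposition~\ref{commutator} and Theorem~\ref{main_theorem}'', and your write-up simply fills in the details of that assembly, including the correct observation that in the same-ray case $\<e_1+e_2,\rho\>=-2$ forces $e_1+e_2\notin\omega_M$. No gaps.
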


\begin{example}\label{polynom}
Let us illustrate the obtained results for $A=\kk[x_1,\dots,x_n]$ in the settings of Example~$\ref{ex-1}$. Using for simplicity multiindices ($x^I=x_1^{i_1}\dots x_n^{i_n}$),
we set
$$
\p_1=\lambda_1 x^I \frac{\p}{\p x_i}, \qquad \p_2=\lambda_2 x^J
\frac{\p}{\p x_j},
$$
where $x^I$ and $x^J$ do not contain factors $x_i$ and $x_j$
respectively. Then the following conditions hold:
\begin{itemize}
\item[1)] $[\p_1,\p_2]=0 \Leftrightarrow$ $x^I$ does not depend on $x_j$ and $x^J$ does not depend on $x_i$;
\item[2)] $[\p_1,\p_2]\in \LND(A)\Leftrightarrow \p_1+\p_2\in \LND(A) \Leftrightarrow$  $x^I$ does not depend on $x_j$ or $x^J$ does not depend on $x_i$.
\end{itemize}

Note that in the settings of Example $\ref{polynom}$ Theorem \ref{main_theorem} can be easily obtained from the following proposition.

\begin{proposition}\label{principle}\cite[Principle 5]{GF}
Let $\p\in \LND(A)$ and $f_1,\dots,f_m\in A$ ($m\geqslant 1$). Suppose there exists a permutation $\sigma\in S_m$ such that
$\p f_i\in f_{\sigma(i)}A$ for each $i$. Then in each orbit of
$\sigma$ there is an index $i$ with $\p f_i=0$.
\end{proposition}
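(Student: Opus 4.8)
The plan is to prove the statement using the \emph{degree function} attached to the locally nilpotent derivation $\p$. For a nonzero $a\in A$ set
$$
\nu_{\p}(a)=\max\{n\in\ZZ_{\geqslant 0}\ |\ \p^n a\neq 0\},
$$
and put $\nu_{\p}(0)=-\infty$; this is well defined exactly because $\p$ is locally nilpotent. The single fact that drives the whole argument is the multiplicativity
$$
\nu_{\p}(ab)=\nu_{\p}(a)+\nu_{\p}(b),\qquad a,b\in A\setminus\{0\},
$$
together with the evident $\nu_{\p}(\p a)=\nu_{\p}(a)-1$ whenever $\p a\neq 0$.

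First I would establish multiplicativity, which I expect to be the heart of the proof. Writing $p=\nu_{\p}(a)$ and $q=\nu_{\p}(b)$ and expanding $\p^{p+q}(ab)$ and $\p^{p+q+1}(ab)$ by the Leibniz rule, every summand of the higher power annihilates either $a$ or $b$, so $\p^{p+q+1}(ab)=0$; meanwhile the unique surviving term $\binom{p+q}{p}\,\p^p a\cdot\p^q b$ of $\p^{p+q}(ab)$ is nonzero because $A$ has no zero divisors and the integer $\binom{p+q}{p}$ is invertible in $\kk$. This is the one place where both standing hypotheses — integral domain and characteristic zero — are genuinely used, and it is the main obstacle; everything afterwards is a short counting argument.

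Next I would reduce to a single orbit of $\sigma$ and argue by contradiction. Fix a cycle $i_1\to i_2\to\dots\to i_k\to i_1$ with $\sigma(i_j)=i_{j+1}$ (indices read modulo $k$), and suppose $\p f_{i_j}\neq 0$ for \emph{every} $j$ in the cycle. By hypothesis $\p f_{i_j}=f_{i_{j+1}}g_{i_j}$ for some $g_{i_j}\in A$; since the left-hand side is nonzero, both $f_{i_{j+1}}$ and $g_{i_j}$ are nonzero. Applying $\nu_{\p}$ and the two properties above yields
$$
\nu_{\p}(f_{i_j})-1=\nu_{\p}(f_{i_{j+1}})+\nu_{\p}(g_{i_j})\geqslant\nu_{\p}(f_{i_{j+1}}).
$$

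Finally, summing this inequality over $j=1,\dots,k$ around the cycle, the contribution $\sum_j\nu_{\p}(f_{i_j})$ appears on both sides and cancels, leaving $-k\geqslant 0$, which is absurd since $k\geqslant 1$. Hence at least one $\p f_{i_j}$ vanishes, which is precisely the assertion for this orbit. Running the argument independently on each cycle of $\sigma$ gives the proposition. The strictly decreasing degrees forced by the relation $\nu_{\p}(f_{i_j})>\nu_{\p}(f_{i_{j+1}})$ simply cannot close up into a cycle, and this is the whole mechanism once multiplicativity is in hand.
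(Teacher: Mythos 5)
Your proof is correct: multiplicativity of the degree function $\nu_{\p}$ on a domain over a field of characteristic zero (where the binomial coefficient $\binom{p+q}{p}$ is indeed the crux), together with $\nu_{\p}(\p a)=\nu_{\p}(a)-1$ and $\nu_{\p}(g)\geqslant 0$ for nonzero $g$, forces the strict drop around the cycle and the contradiction $-k\geqslant 0$. The paper gives no proof of this proposition at all --- it is quoted from Freudenburg's book as Principle~5 --- and your degree-function argument is essentially the standard proof found there, so nothing further is needed.
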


Indeed, suppose $\p=\p_1+\p_2\in \LND(A)$. We take $f_1=x_i$,
$f_2=x_j$. Then $\p f_1=\lambda_1 x^I\ne 0$ and
$\p f_2=\lambda_2 x^J\ne 0$. Using Proposition $\ref{principle}$ we
obtain the required condition. The converse is immediate.
\end{example}

\begin{corollary}
Let $\p_1$ and $\p_2$ be two nonzero homogeneous LNDs on $A$ of fiber type. Suppose $e_1+e_2\notin\omega_M$. Then the Lie algebra $L$ generated by $\p_1$ and $\p_2$ over $\kk$ is finite dimensional and consists of locally nilpotent derivations.
\end{corollary}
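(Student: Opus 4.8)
The plan is to dispose first of the cases in which $\p_1$ and $\p_2$ commute, then to describe $L$ explicitly in the one remaining case, and to finish with a single weight estimate; the explicit description is where I expect the real work to lie.

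First I would unwind the hypothesis. As recorded in the proof of Theorem~\ref{main_theorem}, the condition $e_1+e_2\notin\omega_M$ forces either $\rho_1=\rho_2$, or $\rho_1\neq\rho_2$ with at least one of $\<e_1,\rho_2\>$, $\<e_2,\rho_1\>$ equal to zero. If $\rho_1=\rho_2$ (Lemma~\ref{sameray}) or if both pairings vanish (Proposition~\ref{commutator}, part~1), then $[\p_1,\p_2]=0$, so $L=\kk\p_1+\kk\p_2$ is abelian of dimension at most two; finite dimensionality is immediate and local nilpotency of its elements will follow from the weight argument below. Thus the essential case is $\rho_1\neq\rho_2$ with, say, $\<e_1,\rho_2\>=0$ and $c:=\<e_2,\rho_1\>>0$, where $c$ is a positive integer since $\rho_1$ is primitive and $e_2\in\sigma_{\rho_2}^{\vee}$.

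For finite dimensionality I would produce $L$ by hand. Since $\p_1,\p_2$ are homogeneous and the bracket is additive on degrees, $L$ is $M$-graded, and an iterated bracket built from $a$ copies of $\p_1$ and $b$ copies of $\p_2$ is homogeneous of degree $ae_1+be_2$ with coefficient a $\kk$-multiple of $f_1^af_2^b$ (both derivations are of fiber type, hence kill $f_1,f_2\in\kk(X)^{\TT}$, and the function coefficients merely multiply). Setting $\Phi=\p_1$ and $\Psi_k=f_1^kf_2\,\p_{\rho_2,\,ke_1+e_2}$, so that $\Psi_0=\p_2$, the commutator formula of Section~\ref{sec4} gives $[\Phi,\Psi_k]=(c-k)\Psi_{k+1}$ (using $\<ke_1+e_2,\rho_1\>=c-k$ together with $\<e_1,\rho_2\>=0$) and $[\Psi_k,\Psi_{k'}]=0$. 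Hence $W=\kk\Phi+\sum_{k=0}^{c}\kk\Psi_k$ contains $\p_1,\p_2$ and is closed under the bracket, the chain stopping exactly because $c-k$ vanishes at $k=c$. Therefore $L\subseteq W$ and $\dim_{\kk}L\leq c+2$. This closure computation is the main obstacle: one must verify that no bracket ever raises the $e_2$-exponent above $1$ or produces a second derivation of direction $\rho_1$, so that the naively infinite family of iterated brackets collapses to the finite list above.

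It remains to show that every $D\in L$ is locally nilpotent, and here I would use one weight estimate valid in all cases. Every degree occurring in $L$ has the form $ie_1+je_2$ with $i,j\geq 0$ and $(i,j)\neq 0$. I would pick $\ell\in\sigma\cap N$ with $\<e_1,\ell\><0$ and $\<e_2,\ell\><0$ — for instance $\ell=\rho_1+t\rho_2$ with an integer $t>c$ in the essential case, and $\ell=\rho$ or $\ell=\rho_1+\rho_2$ in the two commuting cases — so that $\<ie_1+je_2,\ell\>\leq -1$ for every degree appearing in $L$, while $\<m,\ell\>\geq 0$ for all $m\in\omega_M$ because $\ell\in\sigma=\omega^{\vee}$. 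Applying $D$ to a homogeneous element $a\chi^m\in A$ then strictly lowers the integer $\<\cdot,\ell\>$ on each graded summand, so all summands of $D^N(a\chi^m)$ acquire negative $\ell$-value once $N>\<m,\ell\>$ and must vanish, giving $D^N(a\chi^m)=0$. As $A$ is spanned by such homogeneous elements, $D\in\LND(A)$, which completes the argument.
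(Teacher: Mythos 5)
Your proposal is correct, and it splits naturally into two halves: the description of $L$ follows the paper's route, while the local nilpotency argument is genuinely different. For finite dimensionality the paper also reduces (after disposing of the commuting cases) to $\<e_1,\rho_2\>=0$, $\<e_2,\rho_1\>=n\geqslant 0$ and spans $L$ by $\p_1,\p_2$ and $\p_2^{(i)}=\ad(\p_1)^i\p_2$, $i=1,\dots,n$; your $\Psi_k$ are scalar multiples of these, and your explicit relations $[\Phi,\Psi_k]=(c-k)\Psi_{k+1}$, $[\Psi_k,\Psi_{k'}]=0$ supply the closure verification that the paper leaves implicit (in particular why the chain terminates at $k=c$ and why no bracket of two $\Psi$'s survives). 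Where you diverge is in proving that every element of $L$ is locally nilpotent. The paper argues in two cases: a combination of $\p_2,\p_2^{(1)},\dots,\p_2^{(n)}$ is an LND because these are pairwise commuting LNDs attached to the same ray $\rho_2$, and for $\p_1+\sum\lambda_i\p_2^{(i)}$ it invokes the induction on $\<m,\rho_1\>$ from the sufficiency part of Theorem~\ref{main_theorem}. You instead choose a single $\ell\in\sigma\cap N$ (e.g.\ $\rho_1+t\rho_2$ with $t>c$) pairing strictly negatively with every degree occurring in $L$ and nonnegatively with $\omega_M$, so that $D^N(\widetilde{A_m})=0$ once $N>\<m,\ell\>$. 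This is a clean uniform strengthening of Remark~\ref{deg_notin_cone} to non-homogeneous derivations whose homogeneous components all lie in a common open half-space, it handles all cases (including the commuting ones) at once, and it does not even require knowing that the individual $\Psi_k$ are LNDs; the paper's version, by contrast, reuses machinery already developed and makes visible that each $\p_2^{(i)}$ is itself an LND of fiber type attached to the Demazure root $e_2+ie_1$ of $\rho_2$. Both arguments are complete; yours is arguably the more self-contained of the two.
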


\begin{proof}
We can assume without loss of generality that $\<e_1,\rho_2\>=0$ and $\<e_2,\rho_1\>=n\geqslant 0$. Then the algebra $L$ is linearly generated by the derivations $\p_1,\p_2$ and $\p_2^{(1)},\dots,\p_2^{(n)}$, where $\p_2^{(i)}=\ad(\p_1)^i\p_2$. So the dimension of $L$ equals to $n+2$. Note also that $\p_2^{(i)},\ i=1,\dots,n$, are locally nilpotent derivations of degree $e_2+ie_1$ respectively, which are Demazure roots corresponding to the ray $\rho_2$. Let us assume that a derivation $\p$ in $L$ is a linear combination of $\p_2,\p_2^{(1)},\dots,\p_2^{(n)}$. Then $\p$ is locally nilpotent because all these LNDs commute. If $\p=\p_1+\lambda_0\p_2+\lambda_1\p_2^{(1)}+\dots+\lambda_n\p_2^{(n)}$, the proof is similar to the proof of sufficiency in Theorem~$\ref{main_theorem}$.
\end{proof}

At the moment the author does not know whether the condition $\deg\,\p_1+\deg\,\p_2\notin\omega_M$ implies that $\p_1+\p_2\in\LND(A)$ for locally nilpotent derivations $\p_1,\ \p_2$ of horizontal type. Let us give a particular result in this direction.

\begin{proposition}
Consider an effectively graded affine algebra
$$
A=\bigoplus_{m\in \omega_M}\widetilde{A_m}.
$$
Let $\p_1$ and $\p_2$ be two locally nilpotent derivations on $A$ of degrees $e_1, e_2\in M$ respectively. If $\{te_1+(1-t)e_2,\ t\in\QQ\}\cap\omega_M=\varnothing$, then $\p_1+\p_2\in \LND(A)$.
\end{proposition}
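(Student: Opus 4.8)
The plan is to track the degrees occurring in $(\p_1+\p_2)^n$ and to show that they all leave the cone $\omega$ once $n$ is large, so that the corresponding graded components vanish. Since $\p_1$ and $\p_2$ are homogeneous of degrees $e_1$ and $e_2$, we have $\p_i(\widetilde{A_m})\subseteq\widetilde{A_{m+e_i}}$, and by induction
$$
(\p_1+\p_2)^n(\widetilde{A_m})\subseteq\sum_{i+j=n,\ i,j\geqslant 0}\widetilde{A_{m+ie_1+je_2}}.
$$
Because $\widetilde{A_d}\neq 0$ exactly when $d\in\omega_M$, it suffices to prove that for each fixed $m\in\omega_M$ only finitely many pairs $(i,j)\in\ZZ_{\geqslant 0}^2$ satisfy $m+ie_1+je_2\in\omega_M$; since an element of $A$ is a finite sum of homogeneous ones, local nilpotency then follows by taking the maximum of the resulting exponents.

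The key geometric observation is that, writing $n=i+j$, the degree $m+ie_1+je_2$ equals $m+n\,p$, where $p=\tfrac{i}{n}e_1+\tfrac{j}{n}e_2$ lies on the closed segment $\Sigma=\{te_1+(1-t)e_2\mid t\in[0,1]\}$, a compact subset of the line $L=\{te_1+(1-t)e_2\mid t\in\QQ\}$ from the hypothesis. Since $L\cap\omega_M=\varnothing$ and both $L$ and $\omega$ are rational, I would first argue that $\Sigma\cap\omega=\varnothing$; then, $\Sigma$ being compact and $\omega$ a closed convex cone, the separation theorem yields a rational covector $\rho\in N_\QQ$ together with $\delta>0$ such that $\<x,\rho\>\geqslant 0$ for all $x\in\omega$ while $\<p,\rho\>\leqslant-\delta$ for all $p\in\Sigma$ (the cone structure forces a separating functional bounded below on $\omega$ to be nonnegative there, and compactness of $\Sigma$ provides the uniform gap $\delta$).

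With such a $\rho$ in hand the conclusion is immediate and uniform in $(i,j)$: for $d=m+n\,p$ we get
$$
\<d,\rho\>=\<m,\rho\>+n\,\<p,\rho\>\leqslant\<m,\rho\>-n\delta,
$$
which is negative as soon as $n>\<m,\rho\>/\delta$. Since $\omega\subseteq\{x\mid\<x,\rho\>\geqslant 0\}$, every such $d$ lies outside $\omega$, hence outside $\omega_M$, so $\widetilde{A_d}=0$. Thus $(\p_1+\p_2)^n(\widetilde{A_m})=0$ for all $n>\<m,\rho\>/\delta$, and therefore $\p_1+\p_2\in\LND(A)$.

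The main obstacle is the convex-geometric step, namely passing from the lattice hypothesis $L\cap\omega_M=\varnothing$ to the disjointness $\Sigma\cap\omega=\varnothing$ of the segment from the full cone. The remaining implications are routine, but here one must exclude the degenerate possibility that $L$ meets $\omega$ only in a short piece lying strictly between two consecutive lattice points of $L$, or in a single non-lattice point where $L$ is tangent to $\p\omega$; these are precisely the configurations in which the rescaled degrees $(m+ie_1+je_2)/n$ could accumulate on $\omega$. I expect the rationality of both $L$ and $\omega$, together with the fact that $e_1,e_2$ are themselves lattice points lying off $\omega$, to be decisive in ruling them out. Making this reduction rigorous (or, alternatively, reading the hypothesis with $\omega$ in place of $\omega_M$) is the crux of the argument; once it is secured, the separation and the uniform degree estimate above finish the proof without further difficulty.
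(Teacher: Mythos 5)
Your route is genuinely different from the paper's: the paper passes to the quotient grading by $M'=M/\ZZ(e_1-e_2)$, observes that $\p_1+\p_2$ becomes $M'$-homogeneous of degree $\pi(e_1)\notin\pi(\omega_M)$, and invokes Remark~\ref{deg_notin_cone}, whereas you track the degrees $m+ie_1+je_2$ occurring in $(\p_1+\p_2)^n$ directly and push them out of $\omega$ with a separating functional. Your version makes explicit the uniform estimate that the paper's appeal to Remark~\ref{deg_notin_cone} silently requires; otherwise the two arguments are equivalent, since both reduce the proposition to the convex-geometric claim that the affine line $L$ through $e_1$ and $e_2$ misses the cone $\omega$ itself, and not merely the lattice points $\omega_M$.

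That is exactly the step you flagged as the crux, and you were right to worry: the implication $L\cap\omega_M=\varnothing\Rightarrow L\cap\omega=\varnothing$ is false, and no argument can close this gap, because the proposition as literally stated fails. Take $M=\ZZ^2$ and let $\omega$ be the cone in $M_{\QQ}$ spanned by $(2,1)$ and $(3,1)$, so that $\sigma$ has rays $\rho_1=(1,-2)$ and $\rho_2=(-1,3)$; then $e_1=(1,0)\in S_{\rho_2}$ and $e_2=(1,1)\in S_{\rho_1}$ are Demazure roots, hence degrees of homogeneous LNDs $\p_1,\p_2$ on $A=\kk[\omega_M]$. The line through $e_1$ and $e_2$ is $\{(1,q)\}$, which meets $\omega$ only for $1/3\leqslant q\leqslant 1/2$ and therefore contains no point of $\omega_M$, so the hypothesis holds; yet $e_1+e_2=(2,1)\in\omega_M$, and $\p_1+\p_2\notin\LND(A)$ by Theorem~\ref{main_theorem}. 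The hypothesis must accordingly be read as $\{te_1+(1-t)e_2,\ t\in\QQ\}\cap\omega=\varnothing$ --- which is also what the paper's own proof tacitly uses, since Remark~\ref{deg_notin_cone} needs the degree to lie outside the weight \emph{cone} of the quotient grading, i.e. $\pi(e_1)\notin\pi(\omega)$, not just $\pi(e_1)\notin\pi(\omega_M)$. Under that reading your proof closes with no further work: $L\cap\omega$ is a polyhedron cut out by rational inequalities on a rational line, so if it were nonempty it would contain a rational point; hence the whole line, and in particular your segment $\Sigma$, misses $\omega$, and the separation argument together with the estimate $\<m+np,\rho\>\leqslant\<m,\rho\>-n\delta$ finishes the proof exactly as you wrote it.
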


\begin{proof}
Let us consider the grading of $A$ given by the quotient group $M'=M/(e_1-e_2)$, and denote by $\pi: M\to M'$ the factorization map. Derivations $\p_1$ and $\p_2$ considered as derivations on $M'$-graded algebra are homogeneous of the same degree $\pi(e_1)=\pi(e_2)$. Hence, the sum $\p_1+\p_2$ is $M'$-homogeneous of degree $\pi(e_1)$ as well.

The condition $\{te_1+(1-t)e_2,\ t\in\QQ\}\cap\omega_M=\varnothing$ means that the line passing through the lattice points $e_1,e_2\in M$ does not intersect the cone $\omega_M$. This implies that $\pi(e_1)\notin\pi(\omega_M)$.

Summarizing these facts, we see that $\p$ is a homogeneous derivation on $M'$-graded algebra $A$ and its degree $\pi(e_1)$ does not lie in the weight cone $\pi(\omega_M)$. By Remark $\ref{deg_notin_cone}$ this is sufficient for the local nilpotency of $\p$.
\end{proof}

Unlike the case of derivations of fiber type another implication of Theorem $\ref{main_theorem}$ does not hold for derivations of horizontal type.

\begin{example}
Consider a $\TT$-variety $\AA^2$ with the following action of a
one-dimensional torus $\TT$: $t\cdot (x,y)=(tx,ty)$. The algebra of regular functions $A=\kk[x,y]$ is graded by the weight cone
$\omega_M=\ZZ_{\geqslant 0}$ and $\deg\,x=\deg\,y=1$. Moreover, $\kk(X)^{\TT}=\kk\left(\dfrac xy\right)$.

Consider a homogeneous locally nilpotent derivation
$\p=y\dfrac{\p}{\p x}$ on $A$. Note that $\p\left(\dfrac
xy\right)=1$, hence $\p$ is of horizontal type. Let us show that
two copies of the derivation $\p$ give a counterexample to Theorem
$\ref{main_theorem}$. Condition for commutators given in Proposition
$\ref{commutator}$ for derivations of fiber type also does not hold. Indeed, note that $\deg\,\p=0\in\omega_M$. Taking $\p_1=\p_2=\p$, we
obtain $\deg\,\p_1+\deg\,\p_2=0\in\omega_M$, though
$\p_1+\p_2=2\p\in \LND(A)$ and $[\p_1,\p_2]=[\p,\p]=0\in \LND(A)$.
\end{example}

Finally, we would like to discuss torus actions and homogeneous LNDs coming from actions of reductive algebraic groups, cf. \cite{IA+AL}.

Let $G$ be a reductive algebraic group and $\mathfrak{g}=\Lie G$. We consider a maximal torus $\TT\subset G$ and $\mathfrak{h}=\Lie \TT$. The reductive Lie algebra $\mathfrak{g}$ admits a Cartan decomposition
$$
\mathfrak{g}=\mathfrak{h}\oplus\bigoplus_{\alpha\in\Delta}\mathfrak{g}_{\alpha},
$$
where $\Delta\subset\mathfrak{h}^*$ is the system of roots and $\mathfrak{g}_{\alpha}=\langle e_{\alpha}\rangle$.

Starting with a regular action of $G$ on an affine variety $X$ we obtain the $\TT$-action on $X$ and the corresponding grading of $A=\kk[X]$ by the lattice $M$ of characters of the torus $\TT$. For every $\alpha\in\Delta$, the nilpotent element $e_{\alpha}\in\mathfrak{g}$ defines a homogeneous locally nilpotent derivation $\p_{\alpha}$ on $\kk[X]$ of degree $\alpha$.

\begin{lemma}\label{alpha-beta}
Let $\alpha,\beta\in\Delta$. Then $e_{\alpha}+e_{\beta}$ is nilpotent if and only if $\alpha+\beta\ne 0$.
\end{lemma}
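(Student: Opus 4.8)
The plan is to separate the two implications and to use standard structure theory of the reductive Lie algebra $\mathfrak{g}$, together with the fact that each root vector $e_{\gamma}$, $\gamma\in\Delta$, is a nilpotent element of $\mathfrak{g}$ (equivalently, $\ad e_{\gamma}$ is nilpotent) --- this is exactly what makes the associated derivation $\p_{\gamma}$ locally nilpotent.

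For the implication $\alpha+\beta\ne 0\Rightarrow e_{\alpha}+e_{\beta}$ nilpotent, I would exhibit a single maximal nilpotent subalgebra of $\mathfrak{g}$ containing both $e_{\alpha}$ and $e_{\beta}$. Concretely, I would choose $\xi$ in the real span $\mathfrak{h}_{\mathbb R}$ of the coroots with $\alpha(\xi)>0$ and $\beta(\xi)>0$; then the positive roots $\Delta^{+}=\{\gamma\in\Delta\ |\ \gamma(\xi)>0\}$ determine a Borel subalgebra $\mathfrak{b}=\mathfrak{h}\oplus\mathfrak{n}$ with nilradical $\mathfrak{n}=\bigoplus_{\gamma\in\Delta^{+}}\mathfrak{g}_{\gamma}$. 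By construction $e_{\alpha},e_{\beta}\in\mathfrak{n}$, and since $\ad$ of any element of $\mathfrak{n}$ strictly raises the $\xi$-eigenvalue while $\Delta$ is finite, every element of $\mathfrak{n}$ is $\ad$-nilpotent; hence $e_{\alpha}+e_{\beta}\in\mathfrak{n}$ is nilpotent. The point that needs care is the existence of $\xi$: the two open half-spaces $\{\xi\ |\ \alpha(\xi)>0\}$ and $\{\xi\ |\ \beta(\xi)>0\}$ fail to meet only when $\beta$ is a strictly negative multiple of $\alpha$, and because the root system of a reductive group is reduced, the only roots proportional to $\alpha$ are $\pm\alpha$; a negative proportionality therefore forces $\beta=-\alpha$, i.e. $\alpha+\beta=0$, which is excluded.

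For the converse I would show that $e_{\alpha}+e_{-\alpha}$ is not nilpotent. Since $\alpha\ne 0$, the bracket $[e_{\alpha},e_{-\alpha}]=h_{\alpha}$ is a nonzero element of $\mathfrak{h}$, so after rescaling $e_{\alpha}$ and $e_{-\alpha}$ span, together with $h_{\alpha}$, a subalgebra isomorphic to $\mathfrak{sl}_{2}$. Inside this $\mathfrak{sl}_{2}$ the element $e_{\alpha}+e_{-\alpha}$ is a combination $ax+by$ of the raising and lowering vectors with $a,b\ne 0$, hence regular semisimple: already its restriction $\ad(e_{\alpha}+e_{-\alpha})|_{\mathfrak{sl}_{2}}$ has nonzero eigenvalues $\pm 2\sqrt{ab}$. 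Consequently $\ad(e_{\alpha}+e_{-\alpha})$ is not nilpotent on $\mathfrak{g}$, and so $e_{\alpha}+e_{-\alpha}$ is not a nilpotent element (indeed it is a nonzero semisimple one, and $0$ is the only element that is both).

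The routine ingredients --- nilpotency of elements of the nilradical via the $\xi$-grading, and the eigenvalue computation inside $\mathfrak{sl}_{2}$ --- are entirely standard. I expect the only genuinely delicate step to be the convex-geometry argument in the first part: one must invoke reducedness of the root system to conclude that $\alpha+\beta\ne 0$ already suffices to place both roots strictly on one side of a common hyperplane, which is what lets $e_{\alpha}$ and $e_{\beta}$ sit in a common Borel nilradical.
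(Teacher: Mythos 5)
Your proposal is correct and follows essentially the same route as the paper: for $\alpha+\beta\ne 0$ you place both root vectors in the nilradical of a common Borel via a linear functional positive on both roots (the paper's ``hyperplane with $\alpha,\beta$ in the same open half-space''), and for $\beta=-\alpha$ you use the $\mathfrak{sl}_2$-triple to see that $e_{\alpha}+e_{-\alpha}$ is a nonzero semisimple element. Your write-up merely fills in two details the paper leaves implicit --- the reducedness argument guaranteeing the existence of $\xi$, and the explicit eigenvalues $\pm 2\sqrt{ab}$ --- both of which check out.
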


\begin{proof}
Assume $\beta=-\alpha$. In this case $e_{\alpha}$, $e_{-\alpha}$ and $[e_{\alpha},e_{-\alpha}]$ form an $\mathfrak{sl}_2$-triple and, thus, $e_{\alpha}+e_{-\alpha}$ is semisimple.

For $\alpha\ne -\beta$ we can consider a hyperplane in $\mathfrak{h}^*$ such that $\alpha$ and $\beta$ are contained in the same open half-space. This hyperplane determines a one-dimensional torus $T$, and the roots $\alpha$ and $\beta$ are positive with respect to $T$. Looking at the weights occuring in the decomposition of the element $\ad(e_{\alpha}+e_{\beta})^N(x)$ for some homogeneous $x\in\mathfrak{g}$ we obtain that $e_{\alpha}+e_{\beta}$ is nilpotent.
\end{proof}

Using this lemma and the result of Theorem $\ref{main_theorem}$ we obtain the following

\begin{corollary}\label{alpha-beta-cone}
Assume that $\alpha+\beta\ne 0$. If $\p_{\alpha}$ and $\p_{\beta}$ are of fiber type and $\p_{\alpha}+\p_{\beta}\in\LND(A)$, then $\alpha+\beta$ does not belong to the weight cone $\omega_M$ corresponding to the action of the maximal torus $\TT$ in $G$ on $A$.
\end{corollary}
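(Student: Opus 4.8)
The plan is to obtain this as an essentially immediate consequence of Theorem \ref{main_theorem}, with Lemma \ref{alpha-beta} supplying (and explaining the naturality of) the local nilpotency of the sum. First I would recall that the regular $G$-action on $X$ differentiates to a linear map $\mathfrak{g}\to\Der(A)$ sending a Lie algebra element to its induced vector field on $X$. Since this map is linear, the derivation attached to $e_{\alpha}+e_{\beta}$ is exactly $\p_{\alpha}+\p_{\beta}$; this identification is the bridge between the Lie-theoretic Lemma \ref{alpha-beta} and the derivation-theoretic Theorem \ref{main_theorem}.

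Next I would invoke Lemma \ref{alpha-beta}: because $\alpha+\beta\ne 0$, the element $e_{\alpha}+e_{\beta}\in\mathfrak{g}$ is nilpotent. A nilpotent element of $\mathfrak{g}$ exponentiates to a one-parameter unipotent subgroup $\exp(t(e_{\alpha}+e_{\beta}))\subset G$, that is, to a regular $\GG_a(\kk)$-action on $X$, so by the correspondence of Section \ref{sec1} the associated derivation $\p_{\alpha}+\p_{\beta}$ is automatically locally nilpotent. Thus the hypothesis $\p_{\alpha}+\p_{\beta}\in\LND(A)$ is not an extra assumption but a genuine consequence of $\alpha+\beta\ne 0$; this is the sense in which the corollary ``uses this lemma.''

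Finally I would apply Theorem \ref{main_theorem}. By assumption $\p_{\alpha}$ and $\p_{\beta}$ are homogeneous LNDs of fiber type, of degrees $\alpha$ and $\beta$ respectively, so they fall under the classification of Theorem \ref{HLND} and satisfy the hypotheses of Theorem \ref{main_theorem} with $e_1=\alpha$ and $e_2=\beta$. The ``only if'' direction of that theorem then states precisely that, since $\p_{\alpha}+\p_{\beta}\in\LND(A)$, one must have $e_1+e_2=\alpha+\beta\notin\omega_M$, which is the desired conclusion.

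I do not expect any real obstacle here, since the substantive work is already carried by Lemma \ref{alpha-beta} and Theorem \ref{main_theorem}; the corollary merely repackages them. The only points requiring care are bookkeeping: confirming the identification $\p_{\alpha}+\p_{\beta}=\p_{e_{\alpha}+e_{\beta}}$ via linearity of the differential of the action, and verifying that the standing hypotheses behind Theorem \ref{main_theorem} are in force in this setting, namely that $X$ is normal (the paper's blanket assumption), that $\p_{\alpha},\p_{\beta}$ are of fiber type (assumed), and that their degrees are indeed the roots $\alpha,\beta$ (as recorded in the construction of $\p_{\alpha}$).
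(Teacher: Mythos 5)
Your proposal is correct and matches the paper's (implicit) argument: the paper gives no separate proof, simply stating that the corollary follows from Lemma \ref{alpha-beta} and Theorem \ref{main_theorem}, which is exactly the combination you spell out. Your additional observation that the hypothesis $\p_{\alpha}+\p_{\beta}\in\LND(A)$ is in fact automatic from $\alpha+\beta\ne 0$ via the lemma is a faithful reading of the role the paper intends the lemma to play.
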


\begin{example}
Consider a natural $GL_n(\kk)$-action on the affine space $\AA^n$. The affine space $\AA^n$ gets a structure of a toric variety under the action of the maximal torus
$$
\TT^n=\{\diag(t_1,\dots,t_n)\}\subseteq GL_n(\kk).
$$
The weights of $x_1,\dots,x_n$ are $\varepsilon_1,\dots,\varepsilon_n$ respectively. A root $\alpha$ determines an LND $\p_{\alpha}$:
$$
\alpha=\varepsilon_i-\varepsilon_j, i\ne j \qquad \rightsquigarrow \qquad \p_{\alpha}=x_i\frac{\p}{\p x_j}.
$$
It is easy to see that
$$
\p_{\alpha}+\p_{\beta}=x_i\frac{\p}{\p x_j}+x_k\frac{\p}{\p x_l}\in\LND(A) \quad \Leftrightarrow \quad (i,j)\ne(l,k)\quad \Leftrightarrow \quad\alpha\ne -\beta.
$$
Moreover, $\alpha+\beta=\varepsilon_i-\varepsilon_j+\varepsilon_k-\varepsilon_l\notin \omega_M=\ZZ_{\geqslant 0}^n$ for $(i,j)\ne(l,k)$. This illustrates Corollary~$\ref{alpha-beta-cone}$.
\end{example}

\begin{example}
Consider a natural $SL_n(\kk)$-action on the affine space $\AA^n$. The maximal torus
$$
\TT^{n-1}=\{\diag(t_1,\dots,t_n)\ |\ \prod_i\,t_i=1\}\subseteq SL_n(\kk)
$$
acts on $\AA^n$ with complexity one. The weights of $x_1,\dots,x_n$ are $\varepsilon_1,\dots,\varepsilon_{n-1}$ and $-\varepsilon_1-\dots-\varepsilon_{n-1}$ respectively, thus, $\omega_M=M$. Therefore, there exist no locally nilpotent derivations of fiber type. Taking $\p_{\alpha}+\p_{\beta}$ with $\alpha+\beta\ne 0$ we obtain one more counterexample to Theorem $\ref{main_theorem}$ in case of derivations of horizontal type.
\end{example}

\section*{Acknowledgement}

The author is grateful to her supervisor Ivan~Arzhantsev for posing the problem and permanent support in preparing the text of this paper, and to Polina~Kotenkova for useful suggestions.

%%%%%%%%%%%%%%%%%%%%%%%%%%%%%%%%%%%%%%%%%%

\end{document}